\numberwithin{equation}{section}
\def\v{\varphi}
\def\Re{{\sf Re}\,}
\def\Im{{\sf Im}\,}
\newcommand{\D}{\mathbb D}
\newcommand{\R}{\mathbb R}
\newcommand{\Ha}{\mathbb H}
\newcommand{\Z}{\mathbb Z}
\newcommand{\C}{\mathbb C}
\newcommand{\B}{\mathbb B}
\newcommand{\N}{\mathbb N}
\def\Re{{\sf Re}\,}
\def\Im{{\sf Im}\,}
\def\Re{{\sf Re}\,}
\def\Im{{\sf Im}\,}
\def\v{\varphi}
\def\Re{{\sf Re}\,}
\def\Im{{\sf Im}\,}
\def\1#1{\overline{#1}}
\def\2#1{\widetilde{#1}}
\def\3#1{\widehat{#1}}
\def\4#1{\mathbb{#1}}
\def\5#1{\frak{#1}}
\def\6#1{{\mathcal{#1}}}
\def\Re{{\sf Re}\,}
\def\Im{{\sf Im}\,}
\newcommand{\mcite}[1]{\csname b@#1\endcsname}
\theoremstyle{theorem}
\def\Re{{\sf Re}\,}
\def\Im{{\sf Im}\,}
\def\Arg{{\rm Arg}}
\newtheorem{theorem}{Theorem}[section]
\newtheorem{lemma}[theorem]{Lemma}
\newtheorem{proposition}[theorem]{Proposition}
\newtheorem{corollary}[theorem]{Corollary}
\theoremstyle{definition}
\newtheorem{definition}[theorem]{Definition}
\theoremstyle{remark}
\newtheorem{remark}[theorem]{Remark}
\newtheorem{question}[theorem]{Question}
\numberwithin{equation}{section}
\title[A counterexample to parabolic dichotomy]{A counterexample to  parabolic dichotomies in holomorphic iteration}
\author{Leandro Arosio$^1$, Filippo Bracci$^1$ \and Herv\'e Gaussier$^2$}
\address{Leandro Arosio,
Dipartimento Di Matematica, Universit\`a di Roma \lq\lq Tor Vergata\rq\rq, Via Della Ricerca Scientifica 1, 00133 Roma, Italy}
\email{arosio@mat.uniroma2.it}
\address{Filippo Bracci, Dipartimento Di Matematica, Universit\`a di Roma \lq\lq Tor Vergata\rq\rq, Via Della Ricerca Scientifica 1, 00133 Roma, Italy}
\email{bracci@mat.uniroma2.it}
\address{Herv\'e Gaussier, Univ. Grenoble Alpes, IF, F-38000 Grenoble, France and CNRS, IF, F-38000 Grenoble, France}
\email{herve.gaussier@univ-grenoble-alpes.fr}
\subjclass[2010]{Primary 32H50.  Secondary 32F45, 53C23}
\thanks{$^1\,$Partially supported by  PRIN {\sl  Real and Complex Manifolds: Geometry and Holomorphic Dynamics} n. 2022AP8HZ9, by INdAM, and by the   MUR Excellence Department Project MatMod@TOV
CUP:E83C23000330006}
\thanks{$^2\,$ Partially supported by ERC ALKAGE}
\begin{document}

\begin{abstract}
We give an example of a parabolic holomorphic self-map $f$ of the unit ball $\B^2\subset \C^2$ whose  canonical Kobayashi hyperbolic semi-model is given by an elliptic automorphism of the disc $\D\subset \C$, which can be chosen to be different from the identity. As a consequence, in contrast to the one dimensional case, this provides a first example of a holomorphic self-map of the unit ball which has points with zero hyperbolic step and points with nonzero hyperbolic step, solving an open question and showing that parabolic dynamics in the ball $\B^2$ is radically different from parabolic dynamics in the disc.
The example is obtained via a geometric method, embedding the ball $\B^2$  as a domain $\Omega$ in  the bidisc $\D\times \mathbb{H}$ that  is forward invariant and absorbing for the map $(z,w)\mapsto (e^{i\theta}z,w+1)$, where $\Ha\subset \C$ denotes the right half-plane.  We also show that  a complete Kobayashi hyperbolic domain $\Omega$ with such properties cannot be Gromov hyperbolic  w.r.t. the Kobayashi distance (hence, it cannot be biholomorphic to $\B^2$) if an additional quantitative geometric condition is satisfied.
\end{abstract}
\maketitle
\tableofcontents

\section{Introduction}
\subsection{Two conjectural dichotomies in holomorphic iteration} Let $\B^q$ be the unit ball in $\C^q$, $q\geq 1$, and let $f\colon \B^q\to \B^q$ be a holomorphic self-map. The dynamics of $f$ has been the object of intensive study since the foundational works of Julia, Wolff and Denjoy, both in one variable and, more recently, in several  variables (see  \cite{AbBook, Abbook2} for a monograph on this subject or  \cite{PPC} for a recent survey).
The map $f$ is called {\sl elliptic} if it has a fixed point in $\B^q$. If $f$ is non-elliptic, then by Herv\'e's generalization of the classical  Denjoy--Wolff Theorem there exists a point $\zeta\in \partial \B^q$ such that $f^n(z)\to \zeta$ for all $z\in \B^q$.

A sort of derivative can be calculated at $\zeta$, even if $f$ is not necessarily continuous at $\zeta$. The number $$\lambda_\zeta:=\liminf_{z\to \zeta}\frac{1-\|f(z)\|}{1-\|z\|}$$ satisfies $0<\lambda_\zeta\leq 1$ and is called the {\sl dilation} of $f$ at $\zeta$.
A non-elliptic map $f$ is called {\sl parabolic} if  $\lambda_\zeta=1$, and is called
 {\sl hyperbolic} if   $0<\lambda_\zeta<1$.
For all $z\in \B^q$, we define the {\sl (hyperbolic) step} of $f$ at $z$ by
$$s(z):=\lim_{n\to\infty} k_{\B^q}(f^n(z), f^{n+1}(z))\in [0,+\infty),$$ which is well-defined since $f$ does not expand the  Kobayashi distance $k_{\B^q}$ of the ball.
Hyperbolic self-maps have nonzero step for every $z\in \B^q$, while there exist  examples of parabolic maps with  all points of zero step ({\sl e.g.}, $z\mapsto z+1$ in the right half-plane $\mathbb{H}$)
 and examples with all points of nonzero step ({\sl e.g.}, a parabolic automorphism). If $q=1$, then this is actually a dichotomy: if $f$ is parabolic then either every point has zero step or every point has nonzero step (\cite{pommerenke, BP}). In other words being {\sl zero-step} or {\sl nonzero-step} is a property of the map $f$. This fact plays a crucial role in the study of the dynamics of parabolic maps.  The following question was asked in 2011 by Poggi-Corradini in \cite[Section 2.4]{PPC}.
\begin{question}[Parabolic step dichotomy]\label{Q0}
Does this  dichotomy also hold if $q>1$?
\end{question}

This  is strongly  related to an open question in the theory of canonical models of holomorphic self-maps of $\B^q$. In \cite{Arosio, Arosio-Bracci} it is proved that every holomorphic self-map $f$ of $\B^q$ admits a (essentially unique) {\sl canonical Kobayashi hyperbolic semi-model}, (or just {\sl canonical model}, for short) that is, a semi-conjugacy $\ell\circ  f=\tau\circ \ell$, where $\ell\colon \B^q\to \B^d$ is a holomorphic map,  $d\in \{ 0, 1, \ldots, q\}$,  $\tau$ is an automorphism of $\B^d$, $\ell(\B^q)$ is {\sl $\tau$-absorbing} (that is, every orbit of $\tau$ is eventually contained in $\ell(\B^q)$), and the triple $(\B^d,\ell,\tau)$ satisfies a natural ``maximality'' property, {\sl i.e.}, every other semi-conjugacy of $f$ to an automorphism of a  Kobayashi hyperbolic complex manifold factorizes through $\ell$. 

The number $d$ introduced in the canonical model is a dynamical invariant of $f$ and it is called the {\sl type} of $f$ (see \cite{Ar-Br2}). 
When $d=0$ the canonical model  degenerates to a point, and we say that the canonical model is {\sl trivial}.  

There is an interesting relation between the canonical model and the step of the map (see, \cite[Theorem~4.6]{Arosio}): for all
$z\in \B^q$, one has 
\begin{equation}\label{step-modello}
s(z)=k_{\B^d}(\ell(z),\tau(\ell(z))),
\end{equation}
and thus it is easy to see that  $f$ has a point of zero step if and only if the automorphism $\tau$ has a fixed point.

It is proved in \cite{Arosio, Arosio-Bracci} that a hyperbolic  self-map has a hyperbolic canonical model $\tau\colon\B^d\to \B^d$ with $0< d\leq q$, and  the dilation of $f$ and $\tau$ at the respective Denjoy--Wolff points is the same. 
Similarly,  a parabolic  holomorphic self-map with all points of nonzero step has a parabolic canonical model $\tau\colon\B^d\to \B^d$ with $0< d\leq q$
(this generalizes to higher dimension the classical theorems in the disc of Valiron \cite{Valiron} and Pommerenke \cite{pommerenke}).
 
The following question was asked in \cite[p. 3330]{Arosio-Bracci} (see also \cite{Arosio-Cortona} for a short survey about this question).
\begin{question}[Parabolic model dichotomy]\label{Q} 
Is the canonical model of a parabolic holomorphic self-map of $\B^q$ either parabolic or trivial?
\end{question}
Since  a parabolic map cannot have a hyperbolic canonical model (see e.g. \cite{Arosio-Bracci}), it follows that a   parabolic holomorphic self-map of $\B^q$ contradicts parabolic model dichotomy if and only if it has
a nontrivial elliptic canonical model.
 If $q=1$, then it follows from the classical linearization results in one variable (see  Baker and Pommerenke \cite{BP,pommerenke} and Cowen \cite{Cowen}) that parabolic  model dichotomy holds.
   Also,  it follows from results in \cite{bayart} that if $f$ is a fractional linear univalent self-map of $\B^q$, then parabolic model dichotomy  holds (see Example 5.19 in \cite{Arosio-Bracci}). 

The two questions are intertwined: if  the parabolic model dichotomy holds, then from \eqref{step-modello} it immediately follows that the parabolic step dichotomy holds. On the other hand, if there exists a parabolic holomorphic self-map of the ball with nontrivial elliptic canonical model {\sl different from the identity}, then that map  does not satisfy the parabolic step dichotomy.       
In this paper we show that such a map $f$ actually exists, and thus the answer to both Question \ref{Q0} and Question \ref{Q} are negative. This shows that parabolic dynamics displays new and unexpected behaviour in several complex variables.
Next  we describe the geometrical method used to construct such map.
\subsection{Embedding the ball in the bidisc}
Let $\D:=\{\zeta\in\C: |\zeta|<1\}$ and $\Ha:=\{\zeta\in\C: \Re \zeta>0\}$. In what follows we will refer both to $\D\times \D$ and to $\D\times \Ha$ as the bidisc, since they are biholomorphic via the Cayley transform on the second coordinate.
Let $\theta\in \R$. The univalent parabolic\footnote{In the bidisc   parabolic holomorphic self-maps can be defined using the rate of escape of orbits w.r.t. the Kobayashi distance instead of the dilation, see \cite{Arosio-Gumenyuk}.} self-map of $\D\times \Ha$ defined as
$$g(\zeta,y)=(e^{i\theta}\zeta, y+1),$$
is easily seen to be a counterexample to both dichotomies in the bidisc: the canonical model
is given by the function $\pi_1(\zeta,y)=\zeta$, which semi-conjugates $g$ to the elliptic automorphism $z\mapsto e^{i\theta}z$ of $\D$.
 Moreover (if $\theta\not\in 2\pi\Z$) the map $g$ has both points of zero step and points of nonzero step. 
Our strategy to find an analogous self-map in $\B^2$ is to look for a univalent map $\v\colon \B^2\to \D\times \Ha$ whose image  $\Omega:=\v(\B^2)$ is forward invariant and $g$-absorbing\footnote{A  forward invariant domain is $g$-absorbing if every orbit eventually enters it.}. If such $\v$ exists, then it easily follows from the results in \cite{Arosio-Bracci}  that the self-map of $\B^2$ defined by $f:= \v^{-1}\circ g\circ \v$ is parabolic and its canonical model is given by the function $\ell:=\pi_1\circ \v$ which semi-conjugates $f$ to $z\mapsto e^{i\theta}z$, $z\in\D$. 

For all $r>0$ let $\D_r:=r\D$. For all $R>0$ we  denote by  $S_R:=\{w\in \C: |\Im w|<R\}$, and for all $M\geq 0$ and $R>0$ we set $$S_{R,M}:=\{w\in \C:  | \Im w|< R,\Re w> M\}.$$
The domain $\Omega$ is $g$-absorbing if and only if for all $0<r<1$, $R>0$ there exists $M\geq 0$ such that 
\begin{equation}\label{geometric}
\D_r\times S_{R,M}\subset \Omega.
\end{equation}
Thus we now face  the following geometric problem:
does there exist a domain $\Omega\subset \D\times \mathbb{H}$ which is biholomorphic to $\B^2$, forward invariant by $g(\zeta,y)=(e^{i\theta}\zeta, y+1)$ for some $\theta$, and satisfies \eqref{geometric}?

It would be natural to guess that the  metric structure of the ball does not allow for such an embedding into the bidisc  to exist. This turns out to be   partly true:
being biholomorphic to $\B^2$, the domain  $\Omega$ has to be Gromov hyperbolic (w.r.t. the  Kobayashi distance), while the bidisc is not Gromov hyperbolic due to its product structure.
This gives a surprising quantitative obstruction to the existence of $\Omega$: if the ratio $R/M$ does not go to 0 fast enough compared with $1/|\log (1-r)|$, then $\Omega$ cannot be Gromov hyperbolic. This is the content of our first result.

\begin{theorem}\label{main}
Let $\Omega\subset \D\times \Ha$ be a complete Kobayashi hyperbolic domain with the property that 
 for all $0<r<1$ there exist  $R>0$ and $M\geq 0$ such that $\D_r\times S_{R,M}\subset \Omega$. 
For every $r\in (0,1)$ let
\[
\sigma(r):=\sup\left\{\frac{R}{M}\ \colon \D_r\times S_{R,M}\subset \Omega\right\}.
\]
If \begin{equation}\label{Eq:suff-cond}
\limsup_{r\to 1}\sigma(r)|\log (1-r)|=+\infty,
\end{equation}
 then $(\Omega, k_\Omega)$ is not Gromov hyperbolic. 
\end{theorem}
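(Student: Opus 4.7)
The plan is a proof by contradiction: assume $(\Omega, k_\Omega)$ is Gromov $\delta$-hyperbolic and construct, for each large $n$, four points $o_n, x_n, y_n, z_n \in \Omega$ whose Gromov products $(x_n \mid y_n)_{o_n}$ and $(y_n \mid z_n)_{o_n}$ tend to infinity while $(x_n \mid z_n)_{o_n}$ stays bounded; this violates the four-point characterization of $\delta$-hyperbolicity for any finite $\delta$. The intuition is that the bidisc $\D \times \Ha$ is not Gromov hyperbolic (its Kobayashi metric is the max of the two factor metrics), and enough of this flatness survives inside $\Omega$ on the product piece $\D_{r_n} \times S_{R_n, M_n}$ to transport the obstruction.

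By hypothesis I extract $r_n \to 1$ and $R_n > 0$, $M_n \geq 0$ with $\D_{r_n} \times S_{R_n, M_n} \subset \Omega$ and $(R_n / M_n)|\log(1-r_n)| \to \infty$. The key technical tool is the sandwich, valid for pairs of points in the product region,
\[
\max(k_\D, k_\Ha) \,\leq\, k_\Omega \,\leq\, \max(k_{\D_{r_n}}, k_{S_{R_n, M_n}}),
\]
where the lower bound comes from $\Omega \subset \D \times \Ha$ and the upper from the product inclusion $\D_{r_n} \times S_{R_n, M_n} \subset \Omega$. I would place
\[
o_n = (0, 2M_n), \quad x_n = (a_n, 2M_n), \quad z_n = (-a_n, 2M_n), \quad y_n = (0, w_n),
\]
with $a_n := 2r_n - 1$ and $w_n \in S_{R_n, M_n}$ to be chosen. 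Setting $L_n := \tfrac{1}{2}|\log(1-r_n)|$, a direct computation gives $k_\D(0, a_n) = L_n + O(1)$ and $k_{\D_{r_n}}(0, a_n) = L_n + O(1)$, so the sandwich yields $d_\Omega(o_n, x_n) = d_\Omega(o_n, z_n) = L_n + O(1)$ and $d_\Omega(x_n, z_n) \geq k_\D(a_n, -a_n) = 2L_n + O(1)$.

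The main obstacle is choosing $w_n$ so that both $k_\Ha(2M_n, w_n)$ and $k_{S_{R_n, M_n}}(2M_n, w_n)$ equal $L_n + O(1)$: the hypothesis $(R_n/M_n)|\log(1-r_n)| \to \infty$ is precisely what guarantees that a suitable $w_n$ exists with $L_n \to \infty$. Using the biholomorphism $\phi(w) = \sinh(\pi(w - M_n)/(2R_n))$ from $S_{R_n, M_n}$ onto $\Ha$ and an explicit asymptotic analysis of the two Kobayashi distances in the image, I would construct such a $w_n$ by a case distinction: an imaginary displacement $w_n = 2M_n + iR_n/2$ in the principal regime $R_n/M_n \to \infty$, and a real-direction displacement $w_n = 2M_n + T_n$ deep inside the half-strip (with $T_n$ tuned so that the half-plane and half-strip distances agree) otherwise. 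Once this matching is established, the diagonal distances $d_\Omega(x_n, y_n)$ and $d_\Omega(z_n, y_n)$ are also $L_n + O(1)$ (upper bounded via the max of two entries each $\leq L_n + O(1)$; lower bounded by $\max(k_\D, k_\Ha) \geq L_n$). Hence $(x_n \mid y_n)_{o_n}, (y_n \mid z_n)_{o_n} \geq L_n/2 - O(1) \to \infty$, whereas $(x_n \mid z_n)_{o_n} \leq O(1)$, contradicting $(x \mid z)_o \geq \min\bigl((x \mid y)_o, (y \mid z)_o\bigr) - \delta$.
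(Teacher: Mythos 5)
Your overall strategy is a genuine alternative to the paper's: the paper builds explicit $(2,0)$-quasi-geodesic triangles (Proposition~\ref{Prop:quasi-geo}) that cannot be uniformly slim, whereas you argue via the four-point (Gromov product) characterization of hyperbolicity. The sandwich $\max(k_\D,k_\Ha)\le k_\Omega\le \max(k_{\D_{r_n}},k_{S_{R_n,M_n}})$, the placement of $o_n,x_n,z_n$ on a horizontal slice and $y_n$ displaced in the second coordinate, and the computation $(x_n\mid z_n)_{o_n}=O(1)$ are all correct and clean. In particular you correctly identify that the problem reduces to finding a suitable $y_n$.

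However, the technical step you flag as ``the main obstacle'' contains a real gap. You ask for $w_n$ with \emph{both} $k_\Ha(w_0,w_n)=L_n+O(1)$ and $k_{S_{R_n,M_n}}(w_0,w_n)=L_n+O(1)$, but such a $w_n$ generally does not exist under the hypothesis. The $k_\Ha$-ball of radius $L_n$ about a real base point $w_0$ has Euclidean radius on the order of $w_0 e^{2L_n}$, while the half-strip has fixed width $2R_n$; since the only direction in which $k_\Ha$ and $k_{S_{R,M}}$ can be pushed simultaneously to $\sim L_n$ is the real one, the matching $\frac{\pi T}{4R_n}\approx L_n\approx \tfrac12\log(1+T/w_0)$ forces $e^{2L_n}\approx \tfrac{4R_n L_n}{\pi w_0}$, a relation that has no reason to hold for the $R_n,M_n$ produced by the hypothesis (take e.g.\ $R_n/M_n\sim 1/\sqrt{L_n}$, which still satisfies $\sigma(r_n)|\log(1-r_n)|\to\infty$). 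The proposed imaginary displacement $w_n=2M_n+iR_n/2$ is worse: deep inside the strip it gives $k_{S_R}(w_0,w_0+iR_n/2)=\tanh^{-1}\tan(\pi/8)$, a bounded quantity, not $\sim L_n$.

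The argument can nevertheless be repaired, and the fix actually makes it simpler. You do not need $k_\Ha(w_0,w_n)\approx L_n$; you only need $h_n:=k_\Ha(w_0,w_n)\to\infty$ while $s_n:=k_{S_{R_n,M_n}}(w_0,w_n)\le L_n+O(1)$. Indeed, with $h_n\le s_n\le L_n+O(1)$ one still gets $d_\Omega(x_n,y_n)=L_n+O(1)$ and $d_\Omega(o_n,y_n)\ge h_n$, hence $(x_n\mid y_n)_{o_n}\ge \tfrac{h_n}{2}-O(1)\to\infty$, which is all the four-point inequality requires. To achieve this, replace the base point by $w_0=M_n+DR_n$ (so the comparison $\kappa_{S_{R,M}}\le c\,\kappa_{S_R}$ of the paper applies), and take $w_n=w_0+T_n$ with $T_n=\tfrac{4R_nL_n}{\pi}$; then $s_n\le cL_n$ and
\[
h_n=\tfrac12\log\Bigl(1+\tfrac{T_n}{2w_0}\Bigr)=\tfrac12\log\Bigl(1+\tfrac{2L_n}{\pi}\cdot\tfrac{R_n}{M_n+DR_n}\Bigr)\longrightarrow\infty
\]
precisely because $(R_n/M_n)L_n\to\infty$ (checking separately the regimes $R_n/M_n$ bounded and unbounded). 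No case distinction on the imaginary direction is needed. With this correction your route is valid and is a nice, more metric-geometric alternative to the quasi-geodesic construction in the paper; the paper's version, on the other hand, avoids Gromov products entirely and gives the explicit $(2,0)$-quasi-geodesics, which is slightly more quantitative.
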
 The proof is based on carefully constructing a sequence of $(2,0)$-quasi-geodesics which are not uniformly slim, see Proposition \ref{Prop:quasi-geo}.
Our second result shows that if we allow $\sigma(r)$ to go to 0 fast enough, then it is possible to embed the ball  in the bidisc with the desired properties. 

\begin{theorem}\label{Thm:contro}
The image $\Omega$ of the univalent map $\v\colon \B^2\to \D\times \Ha$ defined by
$$\v(z,w)=\left(\frac{1}{\sqrt{2}}\frac{w}{\sqrt{1-z}}, \frac{1+z}{1-z}   \right)$$ is 
forward invariant under the map $g(\zeta,y)= (e^{i\theta}\zeta, y+1)$ for all $\theta\in \R$, and $g$-absorbing. Moreover $\sigma(r)=\sqrt{\frac{1-r^4}{r^4}}$.
 \end{theorem}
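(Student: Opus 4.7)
The first step is to verify that $\v$ is well-defined and univalent as a map from $\B^2$ into $\D\times\Ha$. The second component of $\v$ is the Cayley transform of $z$, hence maps into $\Ha$. For the first, I fix the principal branch of $\sqrt{1-z}$ (well-defined on $\D$ since $\Re(1-z)>0$) and estimate
$$
\frac{|w|^2}{2|1-z|}<\frac{1-|z|^2}{2|1-z|}\le \frac{1+|z|}{2}<1,
$$
using $|1-z|\ge 1-|z|$. Univalence is immediate from the explicit inversion: the second coordinate forces $z=(y-1)/(y+1)$, and then $w=2\zeta/\sqrt{y+1}$ is determined (again with the principal branch, since $\Re(y+1)>0$).

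Next, forward invariance is established by computing the conjugated map. A direct calculation gives
$$
\v^{-1}\circ g\circ \v(z,w)=\left(\frac{1+z}{3-z},\ \frac{\sqrt{2}\,e^{i\theta}w}{\sqrt{3-z}}\right).
$$
Using the identity $|3-z|^2-|1+z|^2=8(1-\Re z)$, the condition $|z'|^2+|w'|^2<1$ reduces to $|w|^2<4(1-\Re z)$, which in turn follows from $|w|^2<1-|z|^2$ together with the elementary inequality $1-|z|^2<4(1-\Re z)$ for $|z|<1$; this last inequality is equivalent to $(\Re z-1)(\Re z-3)+(\Im z)^2>0$, and both factors in the first product are negative when $\Re z<1$.

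For the final step, I would compute $\Omega=\v(\B^2)$ via the inversion formulas. The inequality $|z|^2+|w|^2<1$ translates into
$$
\Omega=\{(\zeta,y)\in \D\times\Ha : |\zeta|^2\,|y+1|<\Re y\}.
$$
Writing $y=x+it$, the inclusion $\D_r\times S_{R,M}\subset\Omega$ is equivalent to $r^4((x+1)^2+t^2)\le x^2$ for all $x>M$, $|t|<R$. A short derivative check shows that $x\mapsto x^2/((x+1)^2+t^2)$ is strictly increasing on $(0,\infty)$, so the binding constraint is $x\to M^+$, $|t|\to R^-$, yielding $R^2\le (M^2-r^4(M+1)^2)/r^4$. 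This bound already implies the $g$-absorbing property (for any prescribed $R$, choose $M$ large enough that the right-hand side exceeds $R^2$), and dividing by $M^2$ gives
$$
\left(\frac R M\right)^2\le \frac 1{r^4}-\left(1+\frac 1 M\right)^2,
$$
whose supremum over $M$, attained in the limit $M\to\infty$, equals $(1-r^4)/r^4$; hence $\sigma(r)=\sqrt{(1-r^4)/r^4}$.

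The only genuinely nontrivial point is recognizing the correct monotonicity in $x$ that pins down the binding constraint in the last optimization; everything else is direct algebraic manipulation, kept clean by moving freely between the $(z,w)$- and $(\zeta,y)$-coordinates via the explicit inverse of $\v$.
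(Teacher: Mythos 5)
Your verification that $\v$ maps $\B^2$ into $\D\times\Ha$ and your computation of $\sigma(r)$ and the $g$-absorbing property are correct, and the $\sigma(r)$ part is a clean direct optimization that essentially parallels the paper's Propositions~\ref{stolz} and~\ref{Prop-comp} (which instead identify the slices $\Omega\cap\{\zeta=\zeta_0\}$ as bounded by a branch of a hyperbola and read off the asymptote slopes).

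However, your argument for forward invariance has a concrete gap. Writing out the condition $|z'|^2+|w'|^2<1$ with $z'=\frac{1+z}{3-z}$ and $w'=\frac{\sqrt{2}e^{i\theta}w}{\sqrt{3-z}}$ gives
\[
\frac{|1+z|^2}{|3-z|^2}+\frac{2|w|^2}{|3-z|}<1,
\]
and multiplying by $|3-z|^2$ and applying the identity $|3-z|^2-|1+z|^2=8(1-\Re z)$ yields
\[
|w|^2<\frac{4(1-\Re z)}{|3-z|},
\]
\emph{not} $|w|^2<4(1-\Re z)$ as you claim: you have dropped the factor $|3-z|$ in the denominator. Since $|3-z|>2$ when $|z|<1$, the correct condition is strictly stronger than the one you verify. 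Your elementary inequality $(\Re z-1)(\Re z-3)+(\Im z)^2>0$ proves only $1-|z|^2<4(1-\Re z)$, which combined with $|w|^2<1-|z|^2$ gives $|w|^2<4(1-\Re z)$ but \emph{not} the needed bound. To complete this route one must show the stronger statement $(1-|z|^2)\,|3-z|\le 4(1-\Re z)$ for $|z|<1$ (which does hold: for fixed $\Re z$ the left side is decreasing in $(\Im z)^2$, and at $\Im z=0$ the difference factors as $-(1-\Re z)^3\le0$). The paper avoids this entirely by working in $(\zeta,y)$-coordinates: using the description $\Omega=\{|\zeta|^2<\Re y/|1+y|\}$, invariance under $y\mapsto y+1$ follows from the elementary observation that, for fixed $\Im y$, the function $\Re y\mapsto |1+y|/\Re y$ is decreasing on $(0,\infty)$, which is transparent from $|1+y|^2/(\Re y)^2=1+2/\Re y+(1+(\Im y)^2)/(\Re y)^2$. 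This is cleaner than conjugating back to $(z,w)$-coordinates and sidesteps the factor you lost.
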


Defining the holomorphic self-map of $\B^2$  as $f:=\v^{-1}\circ g\circ \v$ we have the following corollary, showing that the answer to Questions \ref{Q0} and \ref{Q} is negative.
\begin{corollary}\label{Cor:semimodel-ex}
For all $\theta\in \R$ the parabolic univalent self-map of $\B^2$ defined by
$$f(z,w)=\left(\frac{1+z}{3-z}, \frac{\sqrt 2 e^{i\theta} w}{\sqrt{3-z}}\right)$$
has an elliptic  canonical model given by the map $\ell(z,w)=\frac{1}{\sqrt{2}}\frac{w}{\sqrt{1-z}}$,
which semi-conjugates $f$ to the automorphism $z\mapsto e^{i\theta}z$ on $\D$.
If $\theta\not\in 2\pi\Z$, then  every point with $w=0$ has zero step and every point with $w\neq 0$ has  nonzero step.
\end{corollary}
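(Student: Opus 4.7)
The plan is the following. The statement collects three separate claims: the explicit formula for $f$, the description of the canonical model $\ell$, and the step dichotomy.

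First, by direct calculation. Solving the system $\zeta = \tfrac{w}{\sqrt{2}\sqrt{1-z}}$, $y = \tfrac{1+z}{1-z}$ gives the inverse
\[
\v^{-1}(\zeta,y) = \bigl(\tfrac{y-1}{y+1},\,\tfrac{2\zeta}{\sqrt{y+1}}\bigr),
\]
and the composition $g\circ \v(z,w)$ equals $\bigl(\tfrac{e^{i\theta}w}{\sqrt{2}\sqrt{1-z}},\,\tfrac{2}{1-z}\bigr)$. Applying $\v^{-1}$ to the latter produces the first coordinate
\[
\frac{2/(1-z)-1}{2/(1-z)+1} = \frac{1+z}{3-z},
\]
and the second coordinate $\tfrac{\sqrt{2}\,e^{i\theta}w}{\sqrt{3-z}}$ (using $\sqrt{y+1} = \sqrt{(3-z)/(1-z)}$), matching the stated formula for $f$.

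Second, for the canonical model. Theorem \ref{Thm:contro} guarantees that $\Omega = \v(\B^2)$ is $g$-invariant and $g$-absorbing in $\D\times \Ha$. Since canonical models are preserved under conjugation by biholomorphisms and under restriction to forward-invariant absorbing subdomains (\cite{Arosio-Bracci}), the canonical model of $f$ is the pullback by $\v$ of the canonical model of $g$ on $\D\times \Ha$. The latter is $(\D,\pi_1,\tau)$ with $\tau(z) = e^{i\theta}z$: indeed $\pi_1$ semi-conjugates $g$ to $\tau$, and the step-zero equivalence relation $p\sim q \Leftrightarrow \lim_n k_{\D\times\Ha}(g^n(p),g^n(q)) = 0$ collapses only the $\Ha$-coordinate (because $k_\Ha(p_2+n,q_2+n)\to 0$) while preserving the $\D$-coordinate, identifying the quotient with $\D$ via $\pi_1$. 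Thus $\ell = \pi_1\circ \v$ is the canonical model of $f$. Univalence and parabolicity of $f$ follow from the corresponding properties of $g|_\Omega$ via the biholomorphism $\v$.

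Finally, the step dichotomy follows from equation \eqref{step-modello}, which specializes to $s(z,w) = k_\D\bigl(\ell(z,w),\,e^{i\theta}\ell(z,w)\bigr)$. Since $\ell(z,w) = \tfrac{w}{\sqrt{2}\sqrt{1-z}}$ vanishes precisely when $w = 0$, such points satisfy $s(z,w) = k_\D(0,0) = 0$. For $w \neq 0$ and $\theta\notin 2\pi\Z$, the points $\ell(z,w)$ and $e^{i\theta}\ell(z,w)$ are distinct elements of $\D$, so their Kobayashi distance is strictly positive. The only delicate step is the identification of the canonical model, which relies on the functoriality and universality statements of \cite{Arosio-Bracci}; everything else is a direct computation.
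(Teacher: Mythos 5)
Your overall strategy coincides with the paper's: compute $f$ by conjugating $g$ through $\v$, transfer the canonical model of $g$ through the absorbing embedding, and then read the step off equation \eqref{step-modello}. The explicit calculation of $f$ and the step dichotomy at the end are correct and match the paper.

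The identification of the canonical model, however, is more informal than what the paper does, and one step is genuinely missing. The paper does not work with the canonical model of $g$ as a self-map of $\D\times\Ha$; instead it observes that the triple $(\D\times\C,\v,g)$, where $g$ is an \emph{automorphism} of $\D\times\C$ and $\Omega=\v(\B^2)$ is $g$-absorbing there, is a ``model'' in the sense of \cite[Definition~3.3]{Arosio-Bracci}, and then applies \cite[Theorem~4.10]{Arosio-Bracci} to deduce that the canonical Kobayashi hyperbolic semi-model is $(\D,\ell,z\mapsto e^{i\theta}z)$. Your heuristic with the ``step-zero equivalence relation'' captures the right idea (the $\Ha$-coordinate collapses, the $\D$-coordinate survives), but it is not a proof that the quotient is a Kobayashi hyperbolic manifold, nor that the resulting triple satisfies the universal property defining the canonical semi-model; those are precisely the facts that \cite[Theorem~4.10]{Arosio-Bracci} provides once one has the model triple.

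The concrete gap is the claim that ``parabolicity of $f$ follows from the corresponding properties of $g|_\Omega$ via the biholomorphism $\v$.'' Parabolicity of a self-map of $\B^2$ is defined via the dilation at the Denjoy--Wolff point, which is an intrinsic notion only once one knows $f$ is non-elliptic; asserting that $g|_\Omega$ is ``parabolic'' and transferring this is circular unless you verify the biholomorphism-invariant characterisation (for instance, that $\frac{1}{n}k_\Omega(g^n(p),p)\to 0$). The paper settles this cleanly in two alternative ways: (a) an elliptic non-trivial canonical model excludes the hyperbolic case by \cite[Lemma~4.12]{Arosio-Bracci}; (b) the slice $\{w=0\}$ is $f$-invariant, $f(z,0)=(\tfrac{1+z}{3-z},0)$, and $z\mapsto\tfrac{1+z}{3-z}$ is a parabolic self-map of $\D$, which forces $f$ to be non-elliptic with dilation $1$ at $(1,0)$. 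You should add one of these (the slice argument is the most elementary). Everything else in your write-up is essentially the paper's argument.
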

Every orbit of $f$ converges to the Denjoy--Wolff point $(1,0)$.
Notice that the slice $\{w=0\}$ is invariant by $f$ and in the slice every orbit converges to $(1,0)$ radially, while for every $(z_0,w_0)$ with $w_0\neq 0$ the orbit $(f^n(z_0,w_0))_{n\geq 0}$ converges tangentially to   $(1,0)$.

In fact, the domain $\Omega$ constructed in Theorem~\ref{Thm:contro} is forward invariant under $(\zeta,y)\mapsto (e^{it\theta}\zeta, y+t)$ for all $t\geq 0$. Therefore, the map in the previous corollary gives an example of a parabolic semigroup of holomorphic self-maps of $\B^2$, defined by
$$f_t(z,w)=\left(\frac{t+z(2-t)}{2+t-zt},\frac{\sqrt 2e^{it\theta} w}{\sqrt{2+t-zt}}   \right),\quad t\geq 0,$$
 with elliptic canonical model given by the map $\ell(z,w)=\frac{1}{\sqrt{2}}\frac{w}{\sqrt{1-z}}$ which semi-conjugates $(f_t)$ to the semigroup of automorphisms $(z\mapsto e^{it\theta}z)_{t\geq 0}$ of $\D$.
 
In Proposition~\ref{Prop:semimodel-ex} we generalize the previous results to a larger class of examples.

\medskip

We sincerely thank the anonymous referee for many helpful comments which improve the original manuscript.

\section{A quantitative obstruction to  Gromov hyperbolicity}
In this section we prove Theorem \ref{main}.
We first recall some definitions. For a domain in $\C^q$, we denote by $\kappa_X$ the Kobayashi--Royden metric of $X$ and by $k_X$ the Kobayashi distance of $X$.
\begin{definition}
Let $\Omega\subset \C^q$ be a Kobayashi hyperbolic domain. 
Fix $A\geq1$, $B\geq0$. Let $I\subset \R$ be an interval.
A (non-necessarily continuous) map $\sigma\colon I\to \Omega$ is  an $(A,B)$-\textit{quasi-geodesic } if for every $t,s\geq0$
$$A^{-1}|t-s|-B\leq k_\Omega(\sigma(t),\sigma(s))\leq A|t-s|+B.$$
\end{definition}
\begin{remark}
Fix $A\geq1$, $B\geq0$. Let $[a,b]\subset \R$ be an interval. Let   $\sigma\colon [a,b]\to \Omega$ be a Lipschitz  curve such that $$\ell_\Omega(\sigma;[s,t])\leq Ak_\Omega(\sigma(s),\sigma(t))+B,\quad \forall a\leq s< t\leq b,$$ 
where $\ell_\Omega(\sigma;[s,t])$ denotes the hyperbolic length  ({\sl i.e.},  the distance w.r.t. $k_\Omega$) of the curve $\sigma$ restricted to $[s,t]$.
Then it is easy to see that the curve $\sigma$ reparametrized by hyperbolic arc length is an $(A,B)$-quasi-geodesic.

\end{remark}
\begin{definition}
Let $\Omega\subset \C^q$ be a Kobayashi hyperbolic domain.
The metric space $(\Omega,k_\Omega)$ is Gromov hyperbolic\footnote{This is not the classical definition of Gromov hyperbolicity for length metric spaces, but it is equivalent to the classical one in proper geodesic metric spaces by the so-called Gromov's shadowing lemma (or geodesic stability lemma)} if for all  $A\geq 1,B\geq 0$ there exists a constant $G>0$, such that    every $(A,B)$-quasi-geodesic triangle is $G$-slim, that is, any side is contained in the $G$-neighborhood of the union of the two other sides. 
\end{definition}

\begin{remark}\label{Rem:stimar}
For all $0<r<1$ we have $\kappa_{\D_r}(z;v)=\frac{r|v|}{r^2-|z|^2}.$
If $r> 1/2$, we have that 
$\kappa_{\D_r}(z;v)\leq 2 \kappa_{\D}(z;v)$ if and only if $z\in \overline{\D_{r'}}$, where $$r'(r):=\sqrt{\frac{2r^2-r}{2-r}}.$$
Hence, for every $\zeta, \xi\in \overline{\D_{r'}}$ we have $k_{\D_r}(\zeta, \xi)\leq 2 k_\D(\zeta, \xi)$. 

A straightforward calculation shows that condition \eqref{Eq:suff-cond} is  equivalent to
\begin{equation}\label{Eq:suff-cond-intrinsic}
\limsup_{r\to 1}\sigma(r)k_{\D_r}(s_0,r'(r))=+\infty,
\end{equation}
for any fixed $s_0\in [0,1)$.
\end{remark}

\begin{proposition}\label{Prop:quasi-geo}
Let $\Omega\subset \D\times \C$ be  a complete Kobayashi hyperbolic domain. Suppose there exist $\frac{1}{2}<r<1,R>0$ and $M\geq 0$ such that $\D_r\times S_{R,M}\subset \Omega.$
Fix $c>1$. Then there exists $D=D(c)>0$ such that if $b\geq a\geq M+DR$ and $-r'\leq t_0,  t_1\leq r'$, $t_0\neq t_1$ satisfy
\begin{equation}\label{Eq:cond-for-quasi-geo}
b-a\leq \frac{2R}{c\pi} k_{\D_r}(t_0,t_1),
\end{equation}
then $\gamma:[0,1]\to \Omega$ defined as
\[
\gamma(t):=\left( (t_1-t_0)t+t_0, (b-a)\frac{k_{\D_r}(t_0, (t_1-t_0)t+t_0)}{k_{\D_r}(t_0, t_1)}+a   \right)
\]
is a $(2,0)$-quasi-geodesic joining $\gamma(0)=(t_0, a)$ with $\gamma(1)=(t_1, b)$.
\end{proposition}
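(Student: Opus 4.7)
The plan is to invoke the remark preceding the proposition: it suffices to show that $\gamma$ is a Lipschitz curve satisfying
$$\ell_\Omega(\gamma;[s,t]) \leq 2\,k_\Omega(\gamma(s),\gamma(t)), \qquad 0\leq s < t \leq 1,$$
so that $\gamma$ reparametrized by hyperbolic arc length becomes a $(2,0)$-quasi-geodesic. I introduce the $C^1$ auxiliary function $\tau(t):=k_{\D_r}(t_0,(t_1-t_0)t+t_0)$, which is monotone on $[0,1]$ with $\tau(0)=0$ and $\tau(1)=k_{\D_r}(t_0,t_1)$. Since $t_0,t_1\in[-r',r']$, the image of $\gamma$ is contained in $\overline{\D_{r'}}\times[a,b]\subset \D_r\times S_{R,M}\subset\Omega$; consequently $\kappa_\Omega\leq \kappa_{\D_r\times S_{R,M}}=\max(\kappa_{\D_r},\kappa_{S_{R,M}})$ along $\gamma$ by the product formula for the Kobayashi--Royden metric.

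I would first fix $D=D(c)$ by controlling $\kappa_{S_{R,M}}$ on the real axis. The biholomorphism $w\mapsto \sinh(\pi(w-M)/(2R))$ from $S_{R,M}$ onto $\Ha$ yields
$$\kappa_{S_{R,M}}(w;1)=\frac{\pi}{4R}\coth\!\left(\frac{\pi(\Re w-M)}{2R}\right)\qquad\text{for real }w>M.$$
Since $\coth$ decreases to $1$ at infinity, the choice $D:=\frac{1}{\pi}\log\frac{2c+1}{2c-1}$ enforces $\coth(\pi D/2)\leq 2c$, so for every real $w$ with $\Re w\geq M+DR$ one has $\kappa_{S_{R,M}}(w;1)\leq \pi c/(2R)$. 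Along $\gamma$ the first-coordinate contribution to $\kappa_{\D_r\times S_{R,M}}(\gamma(t);\gamma'(t))$ equals $\kappa_{\D_r}(\pi_1\gamma(t);t_1-t_0)=\tau'(t)$, while the second-coordinate contribution is bounded by
$$\frac{\pi c}{2R}\cdot\frac{(b-a)\tau'(t)}{k_{\D_r}(t_0,t_1)}\leq\tau'(t),$$
the last inequality being exactly \eqref{Eq:cond-for-quasi-geo}. Taking the max and integrating, $\ell_\Omega(\gamma;[s,t])\leq\tau(t)-\tau(s)$.

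For the matching lower bound on $k_\Omega$ I would use the holomorphic projection $\pi_1\colon\Omega\to\D$, which is $1$-Lipschitz for the Kobayashi distances. Since $\pi_1\gamma(s),\pi_1\gamma(t)\in\overline{\D_{r'}}$, Remark \ref{Rem:stimar} gives
$$k_\Omega(\gamma(s),\gamma(t))\geq k_\D(\pi_1\gamma(s),\pi_1\gamma(t))\geq\tfrac{1}{2}\,k_{\D_r}(\pi_1\gamma(s),\pi_1\gamma(t))=\tfrac{1}{2}|\tau(t)-\tau(s)|,$$
where the final identity uses that $\pi_1\gamma(s),\pi_1\gamma(t)$ lie on the real-axis geodesic through $t_0$. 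Combining with the length estimate yields $\ell_\Omega(\gamma;[s,t])\leq 2\,k_\Omega(\gamma(s),\gamma(t))$, and the remark preceding the proposition concludes. The delicate point is the quantitative calibration of $D(c)$ via the explicit formula for $\kappa_{S_{R,M}}$ on the real axis: this is precisely what makes the factor $2R/(c\pi)$ in \eqref{Eq:cond-for-quasi-geo} suffice to force the second-coordinate contribution to $\kappa_\Omega$ along $\gamma$ to be dominated by the first, turning the product-metric inequality into the desired length bound.
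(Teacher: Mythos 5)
Your argument is correct and follows essentially the same route as the paper: bound $\ell_\Omega(\gamma;[s,t])$ by noting that the product formula $\kappa_{\D_r\times S_{R,M}}=\max(\kappa_{\D_r},\kappa_{S_{R,M}})$ reduces to the first-coordinate contribution $\tau'(u)$ once $D(c)$ is calibrated so that the second-coordinate term is dominated via~\eqref{Eq:cond-for-quasi-geo}, then obtain the matching lower bound on $k_\Omega$ from the $1$-Lipschitz projection $\pi_1$ and Remark~\ref{Rem:stimar}. The only (inessential) variation is that you compute $\kappa_{S_{R,M}}$ on its axis directly via the $\sinh$ conformal map, yielding the explicit value $D(c)=\tfrac{1}{\pi}\log\tfrac{2c+1}{2c-1}$, whereas the paper invokes the two-step comparison $\kappa_{S_{R,M}}\le c\,\kappa_{S_R}$ from \cite{BCD} followed by the closed formula for the full strip.
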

\begin{proof}
Fix $c>1$. Arguing as in the proofs of \cite[Prop. 6.8.5 and Prop. 6.7.2]{BCD} one sees that there exists $D>0$ (depending only on $c$) such that if $s\geq DR+M$ then 
\begin{equation}\label{Eq:stri-semi}
\kappa_{S_{R,M}}(s, v)\leq c\kappa_{S_R}(s, v).
\end{equation}
Let $a, b, t_0, t_1$ and $\gamma=(\gamma_1,\gamma_2)$ be as above. Fix $0\leq s\leq t\leq 1$. Then
\[
\ell_{\Omega}(\gamma;[s,t])\leq \ell_{\D_r\times S_{R,M}}(\gamma;[s,t])=\int_s^t \kappa_{\D_r\times S_{R,M}}(\gamma(u); \gamma'(u))du.
\]
Now,
\[
\kappa_{\D_r\times S_{R,M}}(\gamma(u); \gamma'(u))=\max\{\kappa_{\D_r}(\gamma_1(u); \gamma_1'(u)),\kappa_{S_{R,M}}(\gamma_2(u); \gamma_2'(u)) \}.
\]
Assume for the moment that for $u\in [s,t]$ we have
\begin{equation}\label{Eq:max-first}
\max\{\kappa_{\D_r}(\gamma_1(u); \gamma_1'(u)),\kappa_{S_{R,M}}(\gamma_2(u); \gamma_2'(u)) \}=\kappa_{\D_r}(\gamma_1(u); \gamma_1'(u)).
\end{equation}
Then we have, by Remark~\ref{Rem:stimar},
\begin{equation*}
\begin{split}
\ell_{\Omega}(\gamma;[s,t])&\leq \int_s^t\kappa_{\D_r}(\gamma_1(u); \gamma_1'(u))du=k_{\D_r}((t_1-t_0)s+t_0,(t_1-t_0)t+t_0)\\&\leq 2 k_{\D}((t_1-t_0)s+t_0,(t_1-t_0)t+t_0)=2k_\D(\pi_1(\gamma(s)), \pi_1(\gamma(t)))\leq 2k_\Omega(\gamma(s), \gamma(t)),
\end{split}
\end{equation*}
and we are done. Notice that in the last inequality we used $\pi_1(\Omega)\subset \D$.

So we are left to prove \eqref{Eq:max-first}. We have
\begin{equation}\label{Eq:disc-met}
\kappa_{\D_r}(\gamma_1(u); \gamma_1'(u))=\frac{r|t_1-t_0|}{r^2-((t_1-t_0)u+t_0)^2}.
\end{equation}
On the other side, by \eqref{Eq:stri-semi}, we have
\begin{equation}\label{Eq:strip-met}
\begin{split}
\kappa_{S_{R,M}}(\gamma_2(u); \gamma_2'(u))&\leq c\kappa_{S_R}(\gamma_2(u);\gamma_2'(u))=\frac{c\pi}{2R}\gamma_2'(u)\\&=\frac{c\pi}{2R}\frac{b-a}{k_{\D_r}(t_0,t_1)}\frac{\partial}{\partial u}\left(k_{\D_r}\left(t_0, (t_1-t_0)u+t_0\right)  \right)\\&=\frac{c\pi}{2R}\frac{b-a}{k_{\D_r}(t_0,t_1)}|t_1-t_0|\kappa_{\D_r}((t_1-t_0)u+t_0;1)\\&=\frac{c\pi}{2R}\frac{b-a}{k_{\D_r}(t_0,t_1)}\frac{r|t_1-t_0|}{r^2-((t_1-t_0)u+t_0)^2},
\end{split}
\end{equation}
where, in order to compute the penultimate equality we used that, if $t_1\geq t_0$,
\[
\frac{\partial}{\partial u}k_{\D_r}\left(t_0, (t_1-t_0)u+t_0\right)=\frac{\partial}{\partial u}\int_{t_0}^{(t_1-t_0)u+t_0}\kappa_{\D_r}(v;1)dv,
\] and a similar argument if $t_0>t_1$.
Hence, \eqref{Eq:cond-for-quasi-geo}, together with  \eqref{Eq:disc-met} and \eqref{Eq:strip-met},  implies \eqref{Eq:max-first}.
\end{proof}

\begin{proof}[Proof of Theorem~\ref{main}]
Let $r_k\to 1$ be any sequence in $(\frac{1}{2},1)$. Then for each $k$ there exist $R_k>0$ and $M_k\geq 0$ such that $\D_{r_k}\times S_{R_k,M_k}\subset \Omega$.

Fix $c>1$ and let $D>0$ be given by Proposition~\ref{Prop:quasi-geo}. Let $a_k\geq M_k+DR_k$, and let $b_k>a_k$ be such that
\begin{equation}\label{Eq:condi-qg-pf}
b_k-a_k\leq \frac{2R_k}{c\pi} k_{\D_{r_k}}(0,r'_k),
\end{equation}
where $r'_k:=r'(r_k)$, as defined in Remark~\ref{Rem:stimar}.

Since $k_{\D_{r_k}}(0,r'_k)=k_{\D_{r_k}}(0,-r'_k)$, it follows from Proposition~\ref{Prop:quasi-geo} that 
\[
\gamma^-(t)=\left( r_k' (t-1), (b_k-a_k)\frac{k_{\D_{r_k}}(-r_k', r_k'(t-1))}{k_{\D_{r_k}}(-r_k', 0)}+a_k \right)
\]
and
\[
\gamma^+(t)=\left( r_k' (1-t), (b_k-a_k)\frac{k_{\D_{r_k}}(r_k', (1-t) r_k')}{k_{\D_{r_k}}(r_k', 0)}+a_k \right)
\]
are two $(2,0)$-quasi-geodesics such that $\gamma^-$ joins $(-r'_k, a_k)$ to $(0, b_k)$ and $\gamma^+$ joins  $(r'_k, a_k)$ to $(0, b_k)$.  Moreover  $\alpha_k(t)=(r_k' (2t - 1), a_k)$ is a $(2,0)$-quasi-geodesic which joins $(-r_k',a_k)$ with $(r_k',a_k)$.

Assume that $(\Omega, k_\Omega)$ is Gromov hyperbolic. Then there exists $G\geq 0$ 
such that for all $k\geq 0$ the $(2,0)$-quasi-geodesic triangle $(\gamma_k^-,\gamma_k^+,\alpha_k)$ is $G$-slim.
Let $s_0\in [0,1)$ be such that $k_\D(-s_0, 0)=2G$. Let $s_0<p<1$.  Fix $k_0\in \N$ such that  $r'_k\geq p$ for every $k\geq k_0$. Let $k\geq k_0$. If $t\in [0, 1-\frac{s_0}{r'_k}]$, then since $\pi_1(\Omega)\subset \D$ we have 
\[
k_\Omega(\gamma^-(t), \gamma^+([0,1]))\geq k_\D(\gamma_1^-(t), 0)=k_\D( r_k' (t-1), 0)\geq k_\D(0, -s_0)=2G.
\]
Therefore, since $(\gamma_k^-,\gamma_k^+,\alpha_k)$ is $G$-slim, for all $k\geq k_0$ and $t\in [0, 1-\frac{s_0}{r'_k}]$ we have
\[
k_\Omega(\gamma_k^-(t), \alpha_k)\leq G.
\]
Set
\[
q_k:=\gamma^-_k\left(1-\frac{s_0}{r'_k}\right)=\left(-s_0, (b_k-a_k)\frac{k_{\D_{r_k}}(-r_k', -s_0)}{k_{\D_{r_k}}(-r_k', 0)}+a_k \right).
\]
Notice that $k_{\D_{r_k}}(-r_k', -s_0)=k_{\D_{r_k}}(r_k', s_0)$.
Since $\pi_2(\Omega)\subset\mathbb{H}$,
\[
k_\Omega(q_k, \alpha_k([0,1]))\geq k_{\Ha}( (b_k-a_k)\frac{k_{\D_{r_k}}(r_k', s_0)}{k_{\D_{r_k}}(r_k', 0)}+a_k , a_k)=\frac{1}{2}\log\left(  \frac{(b_k-a_k)}{a_k}\frac{k_{\D_{r_k}}(r_k', s_0)}{k_{\D_{r_k}}(r_k', 0)}+1\right).
\]
Therefore, if $L>0$ is such that $\frac{1}{2}\log (L+1)>G$, we have necessarily that for all $k\geq k_0$,
\begin{equation}\label{Eq:stay-bd}
\frac{(b_k-a_k)}{a_k}\frac{k_{\D_{r_k}}(r_k', s_0)}{k_{\D_{r_k}}(r_k', 0)}\leq L.
\end{equation}
Now, let $a_k=M_k+DR_k$ and $b_k=a_k+\frac{2R_k}{c\pi} k_{\D_{r_k}}(0,r'_k)$. Note that \eqref{Eq:condi-qg-pf} is satisfied and thus \eqref{Eq:stay-bd} holds, that is, 
\[
\frac{2R_kk_{\D_{r_k}}(r_k', s_0)}{c\pi a_k}\leq L,
\] 
or, equivalently,
\[
\frac{M_k}{R_k k_{\D_{r_k}}(r_k', s_0)}\geq \frac{2}{c\pi L}-\frac{D}{k_{\D_{r_k}}(r_k', s_0)}.
\]
Hence
\[
\limsup_{k\to \infty}\frac{R_k k_{\D_{r_k}}(r_k', s_0)}{M_k}\leq \frac{c\pi L}{2}<+\infty.
\]
Taking into account Remark~\ref{Rem:stimar}, we have that
$$\limsup_{r\to 1}\sigma(r)|\log(1-r)|\leq\frac{c\pi L}{2}<+\infty.$$
\end{proof}

\begin{corollary}\label{forintrinsic}
Let $\Omega\subset \D\times \Ha$ be a complete Kobayashi hyperbolic domain. Assume that there exists $C>0$ such that 
for all $0<r<1$ there exists  $M\geq 0$ such that 
$$ \D_r\times S_{CM,M}\subset \Omega.$$  
Then $(\Omega,k_\Omega)$ is not Gromov hyperbolic.
\end{corollary}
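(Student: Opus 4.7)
The plan is to derive Corollary~\ref{forintrinsic} directly from Theorem~\ref{main}, the only task being to verify the divergence condition \eqref{Eq:suff-cond} for the function $\sigma$ associated to $\Omega$.

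The key observation is that the hypothesis bundles the ratio $R/M$ into a single constant $C$. Concretely, for each $r\in (0,1)$, the hypothesis supplies some $M=M(r)\geq 0$ with $\D_r\times S_{CM,M}\subset \Omega$. Since $S_{R,M}=\emptyset$ whenever $R\leq 0$, any $r$ for which the hypothesis is informative forces $M>0$, and the pair $(R,M)=(CM,M)$ is then admissible in the supremum defining $\sigma(r)$. I would record this as the uniform lower bound
\[
\sigma(r)\;\geq\; \frac{CM}{M}\;=\;C\qquad \text{for all }r\in (0,1).
\]

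With this bound, the verification of \eqref{Eq:suff-cond} is immediate:
\[
\limsup_{r\to 1}\sigma(r)\,|\log(1-r)|\;\geq\;\limsup_{r\to 1}C\,|\log(1-r)|\;=\;+\infty.
\]
Theorem~\ref{main} then applies and yields that $(\Omega,k_\Omega)$ is not Gromov hyperbolic.

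There is essentially no obstacle in this corollary: it is a packaged restatement of Theorem~\ref{main} for the geometric regime in which the ratio $R/M$ is forced to stay above a positive constant, and the only subtlety (the cosmetic case $M=0$, which makes the hypothesis vacuous) is handled by simply discarding it. All the analytic difficulty has already been spent in the construction of the $(2,0)$-quasi-geodesics of Proposition~\ref{Prop:quasi-geo} and in the proof of Theorem~\ref{main}; the corollary then follows by a trivial sup comparison.
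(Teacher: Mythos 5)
Your proof is correct and follows exactly the route the paper takes: observe that the hypothesis forces $\sigma(r)\geq C$ for all $r$, so condition \eqref{Eq:suff-cond} holds trivially, and Theorem~\ref{main} applies. The small remark about discarding the degenerate case $M=0$ is a harmless extra observation but does not change the argument.
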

\begin{proof}
It immediately follows from  Theorem \ref{main}  since $\sigma(r)\geq  C$ for all $0<r<1$.
\end{proof}

 For $C>0$ and $M\geq 0$ we denote by $ V_{C,M}\subset \Ha$ the angular subset defined by
 $$ V_{C,M}:=\{w\in \C: \Re w>M, |\Im w|<C\Re w\}.$$
 Let  $\Phi\colon \D\times \Ha\to \D\times \D$ be the biholomorphism given by the inverse of the Cayley transform on the second coordinate, that is
\begin{equation}\label{Eq:formulaPhi}
 \Phi(z,w):=(\Phi_1(z), \Phi_2(w)):=\left(z,\frac{w-1}{w+1}\right).
\end{equation} 
 Let $\tilde V_{C,M}:=\Phi_2(V_{C,M})\subset \D$.
The open set $\tilde V_{C,M}$ is a sector-shaped domain of $\D$ whose closure intersects $\partial \D$ only at $1$, with angle $\theta=2\arctan C$. Using the map $\Phi$ we can obtain an obstruction to Gromov hyperbolicity for domains $\tilde \Omega\subset \D\times \D$   whose closure contains in the boundary the disc $\{1\}\times \overline \D$, in terms of the products of the form $ \D_r\times \tilde V_{C,M}$ contained in $\tilde \Omega$.

 \begin{corollary}\label{sectors}
  Let  $\tilde \Omega$ be a  complete  Kobayashi hyperbolic domain contained in the bidisc $\D\times \D$.
  Assume that there exists $C>0$ such that 
for all $0<r<1$ there exists  $M\geq 0$ such that 
$$ \D_r\times  \tilde V_{C,M}\subset\tilde  \Omega.$$  
Then $(\tilde \Omega,k_{\tilde \Omega})$ is not Gromov hyperbolic.
 \end{corollary}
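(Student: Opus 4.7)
The plan is to pull the situation back to Corollary~\ref{forintrinsic} via the biholomorphism $\Phi\colon\D\times\Ha\to\D\times\D$ from \eqref{Eq:formulaPhi}. Set $\Omega:=\Phi^{-1}(\tilde\Omega)\subset\D\times\Ha$. Since $\Phi$ restricts to a biholomorphism $\Omega\to\tilde\Omega$, the domain $\Omega$ is complete Kobayashi hyperbolic, and $\Phi$ is an isometry for the Kobayashi distances; in particular $(\Omega,k_\Omega)$ is Gromov hyperbolic if and only if $(\tilde\Omega,k_{\tilde\Omega})$ is. So it suffices to show that $(\Omega,k_\Omega)$ is not Gromov hyperbolic.

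Next I would translate the hypothesis. Since $\Phi$ has the product form $\Phi(z,w)=(z,\Phi_2(w))$ and $\tilde V_{C,M}=\Phi_2(V_{C,M})$, the condition $\D_r\times\tilde V_{C,M}\subset\tilde\Omega$ is equivalent to $\D_r\times V_{C,M}\subset\Omega$. Thus for all $0<r<1$ there exists $M\geq 0$ such that $\D_r\times V_{C,M}\subset\Omega$.

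The verification now reduces to a trivial geometric inclusion between the angular set $V_{C,M}$ and a strip of the form $S_{R,M'}$. If $w\in S_{CM,M}$, then $\Re w>M$ and $|\Im w|<CM<C\Re w$, so $w\in V_{C,M}$; in other words $S_{CM,M}\subset V_{C,M}$. Consequently, for every $0<r<1$, with the same $M=M(r)$ supplied by the hypothesis, we obtain
\[
\D_r\times S_{CM,M}\subset \D_r\times V_{C,M}\subset \Omega.
\]
Thus $\Omega$ meets the hypothesis of Corollary~\ref{forintrinsic} with the same constant $C$, and we conclude that $(\Omega,k_\Omega)$, and hence $(\tilde\Omega,k_{\tilde\Omega})$, is not Gromov hyperbolic.

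There is essentially no obstacle: once one recognises that Gromov hyperbolicity transfers through the biholomorphism $\Phi$, the argument is a direct application of Corollary~\ref{forintrinsic}, together with the elementary inclusion of a strip into the angular sector of the same opening constant.
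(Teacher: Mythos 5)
Your proof is correct and follows exactly the route the paper takes: pull back via the biholomorphism $\Phi$ (which preserves both the product structure and Gromov hyperbolicity), observe $S_{CM,M}\subset V_{C,M}$, and invoke Corollary~\ref{forintrinsic}. The paper's own proof is just a one-line version of this; you have merely made the transfer of Gromov hyperbolicity under $\Phi$ explicit.
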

 \begin{proof}
 It immediately follows from Corollary \ref{forintrinsic} since $S_{CM,M} \subset V_{C,M}$.
 \end{proof}
 
%

\section{Construction of the counterexample}\label{counterexample}
\begin{proof}[Proof of Theorem ~\ref{Thm:contro}]
We construct  a domain $ \Omega$ in $\C^2$ such that
\begin{itemize}
\item[i)] $ \Omega$ is bihomomorphic to the ball $\B^2$ and $ \Omega\subset \D\times  \Ha$,
\item[ii)] for all $0<r<1$ and  $R>0$ there exists $M\geq 0$ such that 
$$
\D_r\times S_{R,M}\subset \Omega.
$$
\item[iii)] $ \Omega$ is forward invariant by the map $g(\zeta,y)= (e^{i\theta}\zeta, y+1)$ for all $\theta\in \R$.
\end{itemize}

 Consider the biholomorphism
$\v\colon \D\times \C\to \C\times  \Ha$ defined by
\[
\v(z,w)=\left(\frac{1}{\sqrt{2}}\frac{w}{\sqrt{1-z}}, \frac{1+z}{1-z}   \right).
\]
We will show that $ \Omega:=\v(\B^2)$ satisfies i),ii),iii). Since for $(z,w)\in \B^2$
\[
\frac{1}{2}\frac{|w|^2}{|1-z|}<\frac{1}{2}\frac{1-|z|^2}{|1-z|}<\frac{1-|z|}{|1-z|}\leq 1,
\]
it follows that $ \Omega\subset \D\times  \Ha$, and thus i) is satisfied. In order to prove ii) and iii) we will first show that  $ \Omega$ is described by the following inequality:

\begin{equation}\label{eqomega}
 \Omega:=\left\{(\zeta,y)\in \C\times  \Ha :\quad |\zeta|^2<\frac{\Re y}{|1+y|}\right\}.
\end{equation}
Indeed, the inverse $\psi:=\v^{-1}\colon \C\times  \Ha\to  \D\times \C$ is given by
$$\psi(\zeta,y)=\left(\frac{y-1}{y+1},\frac{2\zeta}{\sqrt{1+y}}\right),$$
and since  $\Omega=\psi^{-1}(\B^2)$,
$$ \Omega:=\left\{(\zeta,y)\in \C\times  \Ha :\quad \frac{|y-1|^2}{|y+1|^2}+\frac{4|\zeta|^2}{|1+y|}<1 \right\},$$
which is easily seen to be equivalent to \eqref{eqomega}.

Now, fix $r\in (0,1)$ and $R>0$. 
By \eqref{eqomega}, the inclusion $\D_r\times S_{R,M}\subset \Omega$ is equivalent to the existence of $M\geq 0$ such that
\[
Q(M):= \frac{M}{\sqrt{(1+M)^2+R^2}}\geq r^2.
\]
Since  $\lim_{M\to \infty}Q(M)=1$ while $r^2<1$ the latter inequality holds for all $M\geq 0$ large enough. Thus, condition ii) holds.

To prove condition iii) we have to show that if $(\zeta, y)\in \Omega$, then $(e^{i\theta}\zeta, y+1)\in  \Omega$, which is equivalent to
\begin{equation}\label{Eq:condit-inv}
|\zeta|^2<\frac{\Re y+1}{|y+2|}.
\end{equation}
Now, for $\Im y$ fixed, the function
\[
\Re y\mapsto \frac{|1+y|}{\Re y}=\sqrt{1+\frac{2}{\Re y}+\frac{1+(\Im y)^2}{(\Re y)^2}}
\]
is decreasing on $(0,+\infty)$. Hence,
\[
|\zeta|^2<\frac{\Re y}{|1+y|}<\frac{\Re y+1}{|y+2|},
\]
and \eqref{Eq:condit-inv} holds.

The formula for $\sigma(r)$ will be established in Proposition~\ref{Prop-comp}.
\end{proof}
Let $\Omega$ be given by \eqref{eqomega} and $\Phi$ be the map given by \eqref{Eq:formulaPhi}. 
The domain $\tilde \Omega:=\Phi(\Omega)\subset \D\times \D$ is biholomorphic to the ball $\B^2$ and contains in the boundary the disc $\{1\}\times \overline \D$. A direct computation shows that actually
\[
\tilde \Omega=\{(\zeta,y)\in\C^2: |y|^2+2|\zeta|^2|1-y|<1\}.
\]

We now show that for all $r<1$ there exists $C>0,M\geq 0$ such that 
$\D_r\times \tilde V_{C,M}\subset \tilde\Omega$, and that, in accordance with Corollary~\ref{sectors}, $C\to 0$ as $r\to 1$.
\begin{proposition}\label{stolz}
For all $0<r<1$ there exists $M\geq 0$ such that $$\D_r\times  \tilde V_{C,M}\subset\tilde \Omega$$ if and only if
$C<\sqrt{\frac{1-r^4}{r^4}}.$
\end{proposition}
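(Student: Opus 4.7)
The plan is to translate the inclusion $\D_r\times \tilde V_{C,M}\subset \tilde\Omega$ back to the half-plane picture via the Cayley map $\Phi_2$, which converts the geometric question into the eventual sign of an explicit quadratic polynomial in $u=\Re w$. The whole proof is then essentially a one-variable analysis.

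First I would substitute $w=(1+y)/(1-y)=\Phi_2^{-1}(y)$, so that $y\in \tilde V_{C,M}$ iff $w\in V_{C,M}$. Using $|1-y|=2/|w+1|$ and $|y|^2=|w-1|^2/|w+1|^2$, the defining inequality $|y|^2+2|\zeta|^2|1-y|<1$ of $\tilde\Omega$ (from the formula recalled just before the statement) becomes, after multiplying by $|w+1|^2$ and using the identity $|w+1|^2-|w-1|^2=4\Re w$, the clean inequality $|\zeta|^2|w+1|<\Re w$. Since the supremum over $|\zeta|<r$ equals $r^2|w+1|$ and is not attained, the inclusion $\D_r\times \tilde V_{C,M}\subset \tilde\Omega$ is equivalent to
\[
r^2|w+1|\leq \Re w\quad\text{for every } w\in V_{C,M}.
\]
Because $\Re w>0$ on $V_{C,M}$, I may square; writing $u=\Re w$, $v=\Im w$, this becomes $r^4\bigl((u+1)^2+v^2\bigr)\leq u^2$ for all $u>M$ and $|v|<Cu$. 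The left-hand side is monotone in $|v|$, and the sup over $|v|<Cu$ (also not attained) is $r^4((u+1)^2+C^2u^2)$, so the whole question collapses to: does there exist $M\geq 0$ with
\[
h(u):=\bigl(1-r^4(1+C^2)\bigr)u^2-2r^4u-r^4\ \geq\ 0\quad\text{for every } u>M?
\]

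The last step is a brief analysis of the quadratic $h$. If $1-r^4(1+C^2)\leq 0$, that is $C\geq \sqrt{(1-r^4)/r^4}$, then $h(u)\leq -2r^4u-r^4<0$ for every $u>0$, so no $M$ works. If instead $1-r^4(1+C^2)>0$, i.e.\ $C<\sqrt{(1-r^4)/r^4}$, then $h$ is an upward-opening quadratic with $h(u)\to +\infty$ as $u\to \infty$, and any $M$ exceeding its larger root works. The main thing to be careful about is the bookkeeping of open/closed sets in the two sup-passages (over $|\zeta|<r$ and over $|v|<Cu$), so that the strict inequality defining $\tilde\Omega$ turns into the correct non-strict one for $h(u)$; beyond this there is no real obstacle.
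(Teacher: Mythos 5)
Your proof is correct and follows essentially the same route as the paper's: both reduce the inclusion, via the Cayley transform, to the explicit inequality $|\zeta|^2 < \Re w/|1+w|$ (equation \eqref{eqomega}) on the sector $V_{C,M}$ in the half-plane, and then analyze the resulting elementary inequality. The only cosmetic difference is that the paper parametrizes the sector as $y=t+iat$ with $|a|<C$ and reads the condition off the inequality $\frac{1}{|\zeta|^4}-1-a^2>\frac{1}{t^2}+\frac{2}{t}$, whereas you square and collect terms into the quadratic $h(u)$; both lead immediately to the threshold $1+C^2<1/r^4$.
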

\begin{proof}
For every $0<r<1$ we look for $C>0$ and  $M\geq 0$ such that  $\D_r\times  V_{C,M}\subset \Omega$. 
By \eqref{eqomega} this means, setting $y=t + iat$, that for all $a\in (-C,C)$ and $t> M$ and for all $|\zeta|< r$ we have
$$\frac{t}{\sqrt{(1+t)^2+a^2t^2}}>|\zeta|^2,$$ that is
$$\frac{1}{|\zeta|^4}-1-a^2>\frac{1}{t^2}+\frac{2}{t}.$$
This is possible if and only if $1+C^2<\frac{1}{r^4}$, or equivalently $C<\sqrt{\frac{1-r^4}{r^4}}.$
\end{proof}
We end this section computing $\sigma$ for the domain $\Omega$.
\begin{proposition} \label{Prop-comp}
For all $r\in (0,1)$ we have that
$$\sigma(r)=\sqrt{\frac{1-r^4}{r^4}}.$$
\end{proposition}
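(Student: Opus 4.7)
The plan is to translate the condition $\D_r \times S_{R,M} \subset \Omega$ directly into an algebraic inequality using the explicit formula \eqref{eqomega} for $\Omega$, and then optimize the ratio $R/M$.

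First I would observe that by \eqref{eqomega}, the inclusion $\D_r\times S_{R,M}\subset\Omega$ is equivalent to
\[
r^2 \le \frac{s}{\sqrt{(1+s)^2+t^2}}
\]
for every $s>M$ and every $|t|<R$ (and, by continuity, on the closure). Squaring and computing the derivative of $s\mapsto s^2/((1+s)^2+R^2)$, one sees it is strictly increasing on $(0,\infty)$, so the infimum of $s/\sqrt{(1+s)^2+t^2}$ over the closed strip $\{s\ge M,\,|t|\le R\}$ is attained at $s=M$, $|t|=R$. Therefore the inclusion is equivalent to
\begin{equation}\label{Eq:planeq}
r^4\bigl((1+M)^2+R^2\bigr)\le M^2.
\end{equation}

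Next I would rearrange \eqref{Eq:planeq} as
\[
\left(\frac{R}{M}\right)^{\!2} \le \frac{1-r^4}{r^4}-\frac{2}{M}-\frac{1}{M^2},
\]
which shows immediately that any admissible ratio $R/M$ is strictly bounded above by $\sqrt{(1-r^4)/r^4}$, giving $\sigma(r)\le\sqrt{(1-r^4)/r^4}$. For the reverse inequality, given any $C^2<(1-r^4)/r^4$, the right-hand side tends to $(1-r^4)/r^4$ as $M\to\infty$, so for $M$ large enough the pair $(R,M)=(CM,M)$ satisfies \eqref{Eq:planeq}, hence $\D_r\times S_{CM,M}\subset\Omega$ and therefore $\sigma(r)\ge C$. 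Letting $C\nearrow\sqrt{(1-r^4)/r^4}$ gives the desired equality.

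The argument is essentially a one-variable calculus exercise once the monotonicity of $s\mapsto s/\sqrt{(1+s)^2+R^2}$ is checked, so there is no serious obstacle; the only mildly delicate point is ensuring that the infimum on the open strip is indeed reached on the boundary, which follows from continuity and the monotonicity in $s$ together with the evident symmetry and monotonicity in $|t|$.
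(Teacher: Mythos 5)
Your proof is correct, and it takes a somewhat different route from the paper's. The paper completes the square in the defining inequality for a slice $\Omega\cap\{\zeta=\zeta_0\}$ to exhibit the slice as the interior of one branch of a hyperbola centered at $1/\nu$ with asymptote slope $\pm\sqrt{\nu}$, $\nu=\frac{1}{|\zeta_0|^4}-1$, and reads off the upper bound $R/M\le\sqrt{(1-r^4)/r^4}$ from the asymptote geometry; for the lower bound it then invokes the proof of Proposition \ref{stolz} about the sectors $V_{C,M}$. You instead compress the inclusion $\D_r\times S_{R,M}\subset\Omega$ into the single closed algebraic inequality $r^4\bigl((1+M)^2+R^2\bigr)\le M^2$ (the monotonicity check for $s\mapsto s^2/((1+s)^2+t^2)$ is exactly what justifies reducing to the corner $(M,R)$), and both the strict upper bound $R/M<\sqrt{(1-r^4)/r^4}$ and the asymptotic achievability as $M\to\infty$ fall out of the rearranged form $\left(R/M\right)^2\le\frac{1-r^4}{r^4}-\frac{2}{M}-\frac{1}{M^2}$. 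What your version buys is self-containment (no appeal to Proposition \ref{stolz}) and a fully explicit quantitative statement of how close $R/M$ can get to the supremum for a given $M$; what the paper's version buys is the geometric picture of the slices as hyperbola interiors, which makes the appearance of the slope $\sqrt{(1-r^4)/r^4}$ transparent and ties in with the sector description $\tilde V_{C,M}$ used elsewhere in the section. Both are essentially the same computation reorganized.
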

\begin{proof}
Fix $\zeta_0\in \D$ and denote $y=x+iu$. Then $(\zeta_0,y)\in \Omega\cap \{\zeta=\zeta_0\}$
if and only if $x>0$ and  $$x^2(1-\frac{1}{|\zeta|^4})+u^2+2x+1<0.$$ Setting $\nu:=\frac{1}{|\zeta|^4}-1$ we get 
$$\frac{(x-\frac{1}{\nu})^2}{\frac{\nu+1}{\nu^2}}-\frac{u^2}{\frac{\nu+1}{\nu}}>1,$$
whose boundary is the right branch of a hyperbola whose axes have angular coefficient $\pm \sqrt{\frac{1-|\zeta^4|}{|\zeta|^4}}$. It follows that if for a given $0<r<1$ we have that there exists $R>0,M\geq 0$ such that 
$\D_r\times S_{R,M}\subset \Omega$, then we must have $\frac{R}{M}\leq  \sqrt{\frac{1-r^4}{r^4}}.$
On the other side, by the proof of Proposition \ref{stolz} we have 
$\sigma(r)\geq \sqrt{\frac{1-r^4}{r^4}}$, and we are done.
\end{proof}

\begin{proof}[Proof of Corollary~\ref{Cor:semimodel-ex}]
Theorem~\ref{Thm:contro} shows that the triple $(\D\times \C, \varphi, g)$ is the ``model'' (in the sense of \cite[Definition~3.3]{Arosio-Bracci}) of the univalent self-map $f$ of $\B^2$. 
It follows by \cite[Theorem~4.10]{Arosio-Bracci} that $f$ has an elliptic  canonical model given by the map $\ell(z,w)=\frac{1}{\sqrt{2}}\frac{w}{\sqrt{1-z}}$.
The statement on the steps follows immediately from  \eqref{step-modello}. 

Since by \cite[Lemma 4.12]{Arosio-Bracci} a hyperbolic map cannot have an elliptic  canonical Kobayashi hyperbolic semi-model, it follows that the map $f$ is parabolic. Alternatively, one can obtain the same conclusion by noticing that the slice $\{w=0\}$ of $\B^2$ is $f$-invariant and $f(z,0)=(\frac{1+z}{3-z},0)$. Taking into account that $\D\ni z\mapsto \frac{1+z}{3-z}$ is a parabolic self-map of $\D$ it follows at once that $f$ is a parabolic self-map of $\B^2$.
\end{proof}

\section{A class of examples}

The aim of this section is to provide a larger class of examples of parabolic univalent self-maps of $\B^2$ with an elliptic semi-model. In fact, our construction will provide examples given by (continuous) semigroups of holomorphic self-maps of $\B^2$. In order to do this, we need some preliminary result. 

\begin{definition}
A   curve $\gamma\colon [0,1)\to \D$ converges to $1$ {\sl radially} (or orthogonally) if
$\lim_{t\to 1}\gamma(t)=1$ and if $$\frac{\gamma(t)-1}{\|\gamma(t)-1\|}\to -1,$$
or, equivalently, if $k_\D(\gamma(t),I)\to 0$ as $t\to 1^-$, where $I$ denotes the 
interval $[0,1)\subset \R$.
\end{definition}
\begin{definition}
A domain $A\subset \C$ is {\sl starlike at infinity} if for all $z\in A$ and $t\geq 0$ we have $z+t\in A.$
\end{definition}
Let $h:\D\to \C$ be a univalent function such that $h(\D)\subset\Ha$ is starlike at infinity and contains a point $t_0\in \R_{> 0}$. We consider the biholomorphism $\v_h:\D\times \C\to \C\times h(\D)$ given by
\begin{equation}\label{Eq:defF}
\v_h(z,w)=\left(\frac{1}{\sqrt{2}}\frac{w}{\sqrt{1-z}}, h(z)  \right).
\end{equation}
Clearly if we take as $h$ the Cayley transform $\mathscr{C}: \D\ni z   \mapsto \frac{1+z}{1-z}$ we obtain the map $\v$ employed in the previous section.

We let
\[
\Omega_h:= \v_h(\B^2).
\]
 Then $(\Omega_h, k_{\Omega_h})$ is Gromov hyperbolic and, by construction, $\Omega_h\subset \D\times \Ha$.
 
Note that
\begin{equation}\label{Eq:Omega-h}
\Omega_h:=\{(\zeta, y)\in \C\times h(\D): |h^{-1}(y)|^2+2|\zeta|^2|1-h^{-1}(y)|<1\}.
\end{equation}

\begin{proposition}\label{Prop:tech-for-h} The following hold:
\begin{enumerate}
\item For all $0<r<1$ there exist  $R>0$ and $M\geq 0$ such that 
\begin{equation}\label{Eq:cond-inscr-weak}
 \D_r\times S_{R,M}\subset \Omega_h
\end{equation}
if and only if $h^{-1}(t)\to 1$ radially as $t\to +\infty$.
\item For all $0<r<1$ and $R>0$  there exists $M>0$ such that 
\begin{equation}\label{Eq:cond-inscr-strong}
\D_r\times S_{R,M}\subset \Omega_h
\end{equation}
if and only if $h^{-1}(t)\to 1$ radially and $\delta_{h(\D)}(t)\to+\infty$ as $t\to+\infty$. Here
$\delta_{h(\D)}(t)$ denotes the Euclidean distance between $t$ and the boundary of $h(\D)$.
\end{enumerate}
\end{proposition}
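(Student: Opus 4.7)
The proof rests on the geometric reformulation: from \eqref{Eq:Omega-h},
\[
\D_r\times S_{R,M}\subset\Omega_h \iff S_{R,M}\subset h(\D)\ \text{and}\ h^{-1}(S_{R,M})\subset K_r,
\]
where $K_r:=\{z\in\D:\ |z|^2+2r^2|1-z|\leq 1\}$. Writing $z=1-\rho e^{i\alpha}$ shows $K_r$ is a closed Stolz-type region at $1\in\partial\D$ of opening angle $2\arccos(r^2)$, shrinking to $\{1\}$ as $r\to 1$; in the Cayley picture, $K_r$ near $\infty$ corresponds to the cone $\{|\arg\zeta|<\arccos(r^2)\}\subset\Ha$.

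For the forward direction of (1), the hypothesis yields $h^{-1}(t)\in K_r$ for every $r<1$ and every real $t>M(r)$. Since $h^{-1}(t)$ cannot cluster inside $\D$, its cluster set lies in $\bigcap_{r<1}\overline{K_r}\cap\partial\D=\{1\}$, hence $h^{-1}(t)\to 1$; the shrinking of $K_r$ as $r\to 1$ then yields $k_\D(h^{-1}(t),I)\to 0$. For the backward direction, starlikeness at infinity writes $h(\D)=\{w:\Re w>\phi(\Im w)\}$ with $\phi\colon\R\to[0,+\infty]$ upper semicontinuous and $\phi(0)<t_0<\infty$; USC at $0$ then furnishes $R_0>0$, $M_0\geq 0$ with $S_{R_0,M_0}\subset h(\D)$, which settles (a) independently of radial convergence. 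For (b), pick $r'\in(r,1)$: in the Cayley picture the cone of $K_{r'}$ sits at positive hyperbolic distance $\delta=\delta(r,r')>0$ from the cone boundary of $K_r$ near $\infty$, and radial convergence places $h^{-1}(t)\in K_{r'}$ near $1$ for $t\geq M_1$. Since the Euclidean disc $B(t,R_0/2)\subset S_{R_0,M_0}$ whenever $t\geq M_0+R_0$, the comparison $\kappa_{S_{R_0,M_0}}\leq\kappa_{B(t,R_0/2)}$ yields the explicit bound
\[
k_{S_{R_0,M_0}}(t+is,t)\leq \tfrac{1}{2}\ln\tfrac{R_0+2|s|}{R_0-2|s|}, \qquad |s|<R_0/2.
\]
Choosing $R\leq R_0/4$ small enough that this is $<\delta$ for every $|s|<R$, the biholomorphism $h$ together with the chain $S_{R,M}\subset S_{R_0,M_0}\subset h(\D)$ gives $k_\D(h^{-1}(y),h^{-1}(\Re y))=k_{h(\D)}(y,\Re y)<\delta$; hence $h^{-1}(y)\in K_r$ for all $y\in S_{R,M}$ with $M\geq\max(M_0+R_0,M_1)$.

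Part (2) follows the same scheme with a key twist. Its forward direction is immediate: fixing any $r$, the hypothesis forces $\delta_{h(\D)}(t)\geq R$ once $t>M(R)+R$, yielding $\delta_{h(\D)}(t)\to+\infty$; and (1) forward supplies radial convergence. For the backward direction one cannot shrink the prescribed $R$; instead, $\delta_{h(\D)}(t)\to+\infty$ produces strips $S_{R',M'}\subset h(\D)$ of arbitrary width $R'>0$, so the disc $B(t,R'/2)\subset S_{R',M'}$ yields $k_{S_{R',M'}}(t+is,t)\leq \tfrac{1}{2}\ln\tfrac{R'+2R}{R'-2R}$, which tends to $0$ as $R'\to\infty$ and is thus $<\delta$ for $R'$ sufficiently large. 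The rest closes exactly as in the backward part of (1). The main obstacle of both backward directions is this Kobayashi upper bound on semi-strips, handled via the inner Euclidean disc; the distinction between (1) and (2) is precisely whether one may shrink $R$ freely or must absorb an arbitrary prescribed $R$ into a still wider inner strip, whose existence is exactly the added hypothesis $\delta_{h(\D)}(t)\to+\infty$.
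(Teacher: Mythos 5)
Your proposal is correct and follows essentially the same route as the paper's proof: both reduce \eqref{Eq:cond-inscr-weak} and \eqref{Eq:cond-inscr-strong} to the statement that $h^{-1}$ maps far-out strips into a shrinking Stolz region at $1$, and both control $k_{h(\D)}(t+is,t)$ via biholomorphic invariance together with an upper bound for the hyperbolic metric on the inscribed semi-strip. The only cosmetic differences are that you phrase the target region as $K_r\subset\D$ and bound $k_{h(\D)}$ by comparison with an inscribed Euclidean disc, whereas the paper works in the half-plane via $\hat h=h\circ\mathscr{C}^{-1}$, uses the estimate $\kappa_{h(\D)}\leq 1/\delta_{h(\D)}$, and invokes \cite[Lemma~6.2.3]{BCD} for the angular approach (and the Denjoy--Wolff theorem for the uniform divergence of $\Re\hat h^{-1}$, which in your version is subsumed in the condition $h^{-1}(y)\in K_r$).
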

\begin{proof} 
(1) Consider the map $\hat{h}:\Ha\to \Ha$ given by $\hat{h}(z)=h(\frac{z-1}{z+1})$, that is $\hat{h}=h\circ \mathscr{C}^{-1}.$
Then  by \eqref{Eq:Omega-h}, 
\begin{equation}\label{Eq:Omega-tilde-h}
\Omega_h=\left\{(\zeta,y)\in \C\times h(\D):\quad|\zeta|^2 < \frac{\Re (\hat{h}^{-1}(y))}{|1+\hat{h}^{-1}(y)|}\right\}.
\end{equation}
 Given $r\in (0,1)$,  $R>0$ and $M\geq 0$  we have that $\D_r\times S_{R,M}\subset \Omega_h$  if and only if
\begin{equation}\label{check}
S_{R,M}\subset h(\D)= \hat{h}(\mathbb H)
\end{equation}
and 
\begin{equation}\label{check2}
r^2 \leq \frac{\Re(\hat{h}^{-1}(y))}{|1+\hat{h}^{-1}(y)|}, \quad \forall y \in S_{R,M}.
\end{equation}
Assume  that for all $r\in (0,1)$ there exist  $R>0$ and $M\geq 0$ such that  $ \D_r\times S_{R,M}\subset \Omega_h$. Fix $r\in(0,1)$. 
Then for all $t>M$ we have that $t\in \hat{h}(\mathbb H)$ and
 \begin{equation*}
r^2\leq \frac{\Re (\hat{h}^{-1}(t))}{|1+\hat{h}^{-1}(t)|}.
\end{equation*}
This means that
\[
\lim_{t\to +\infty}\frac{\Re (\hat{h}^{-1}(t))}{|1+\hat{h}^{-1}(t)|}=1,
\]
which implies $\hat{h}^{-1}(t)\to \infty$ and $\Arg (\hat{h}^{-1}(t))\to 0$ as $t\to +\infty$. Applying the Cayley transform we have that $h^{-1}(t)\to 1$ radially  as $t\to +\infty$.

Conversely, assume  $h^{-1}(t)\to 1$ radially  as $t\to +\infty$.
Now, fix $t_0\geq 0$ such that $t_0\in h(\D)$ and let  $R_0>0$ be such that $t_0+iu\in h(\D)$ for $u\in [-R_0,R_0]$. Since $h(\D)$ is starlike at infinity, it follows that $t+iu\in h(\D)$ for all $t>t_0$ and $u\in [-R_0,R_0]$, and there exists a constant $C>0$, depending only on $t_0$ and $R_0$ such that $\delta_{h(\D)}(t+iu)\geq C$ for all $t\geq t_0$ and $u\in [-R_0,R_0]$.  Hence, for every $t\geq t_0$ and $u\in [-R_0,R_0]$,
\[
k_{\Ha}(\hat{h}^{-1}(t), \hat{h}^{-1}(t+iu))=k_{h(\D)}(t,t+iu)\leq \ell_{h(\D)}(\gamma;[0,1]),
\]
where $\gamma(s)=t+isu$. By the estimates of the hyperbolic metric (see, {\sl e.g.}, \cite[Thm. 5.2.1]{BCD}), 
\begin{equation}\label{Eq:stima-ell-dist}
\ell_{h(\D)}(\gamma;[0,1])=\int_0^1 \kappa_{h(\D)}(\gamma(s);\gamma'(s))ds\leq \int_0^1 \frac{|u| ds}{\delta_{h(\D)}(t+isu)}\leq \frac{|u|}{C}.
\end{equation}
Let $\epsilon>0$ be such that $R_1:=C\epsilon\leq R_0$. Hence, by \eqref{Eq:stima-ell-dist},
\begin{equation}\label{Eq:epsi-h}
k_{\Ha}(\hat{h}^{-1}(t), \hat{h}^{-1}(t+iu)) \leq \epsilon, \quad \forall t\geq  t_0, |u|\leq R_1.
\end{equation}
By the triangle inequality it follows that there exists $t_1\geq t_0$ such that 
\[
k_{\Ha}(\hat{h}^{-1}(t+iu), [1,+\infty))<2\epsilon, \quad \forall t> t_1, |u|\leq R_1.
\]
By \cite[Lemma 6.2.3]{BCD}, this imples that for all $\epsilon>0$ there exist $M\geq 0$ and $R>0$ such that 
\[
\frac{|\Im \hat{h}^{-1}(t+iu)|}{\Re \hat{h}^{-1}(t+iu)}<\epsilon, \quad \forall t>M, |u|\leq R.
\]

Taking into account that, by the Denjoy-Wolff Theorem, the real part of $\hat{h}^{-1}(t+iu)$ converges uniformly, for $|u|\leq R$, to $+\infty$ as $t\to +\infty$ (being the orbit of the semigroup $\phi_t(w):=\hat{h}^{-1}(\hat{h}(w)+t)$ of $\Ha$), the previous equation  implies that, given $r\in (0,1)$, we can find $M\geq 0$ and $R>0$ such that  \eqref{check2} is satisfied for all $y\in S_{R,M}$, and hence $ \D_r\times S_{R,M}\subset \Omega_h$.

(2) The argument follows the previous ideas, so we just sketch it. If \eqref{Eq:cond-inscr-strong} holds, then clearly by \eqref{check} we have $\delta_{h(\D)}(t)\to +\infty$ as $t\to+\infty$. 

Conversely, if $\delta_{h(\D)}(t)\to +\infty$ as $t\to+\infty$, it  follows that for every $R>0$ there exists $t_0\geq 0$ such that $t+iu\in h(\D)$ for all $t> t_0$ and $u\in [-R,R]$. Moreover,  we have $\delta_{h(\D)}(t+iu)\to +\infty$ as $t\to +\infty$ uniformly in $u\in [-R,R]$. By \eqref{Eq:stima-ell-dist} this means that $k_{\Ha}(\hat{h}^{-1}(t), \hat{h}^{-1}(t+iu))\to 0$ as $t\to +\infty$ uniformly in $u$. Since by assumption $k_{\Ha}(\hat{h}^{-1}(t), [1,+\infty))\to 0$, it follows
by \cite[Lemma~6.2.3]{BCD} that for every $0<r<1$, $R>0$ there exists  $M\geq 0$ such that  \eqref{check2} is satisfied  for all $y\in S_{R,M}$, and hence $ \D_r\times S_{R,M}\subset \Omega_h$.
\end{proof}

\begin{lemma}\label{Lem:inside}
Let $A\subsetneq\C$ be a starlike at infinity domain which is symmetric  with respect to the real axis ({\sl i.e.}, $z\in A$ if and only if $\overline{z}\in A$). Then there exists a biholomorphism $h:\D\to A$  satisfying $h(r)\in \R$ for all $r\in(-1,1)$ and $\lim_{r\to 1}h(r)=+\infty$. Moreover for all $\theta\in\R$ and $t>0$ the domain $\Omega_h$ is invariant under the map $$g_t(\zeta, y)= (e^{it\theta} \zeta, y+t).$$ 
\end{lemma}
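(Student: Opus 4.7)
The plan is to construct $h$ via Riemann mapping adapted to the symmetry of $A$, then verify invariance of $\Omega_h$ by conjugating $g_t$ through $\v_h$ and reducing to an infinitesimal inequality for the Berkson--Porta generator, which is checked by a Herglotz decomposition.

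\emph{Existence of $h$.} A starlike-at-infinity domain $A\subsetneq\C$ is a ``horizontal graph'': for each $y_0\in\R$ the slice $\{x\in\R:x+iy_0\in A\}$ is empty or a half-line $(a(y_0),+\infty)$, so $A$ is simply connected. Applying the Riemann mapping theorem together with the Schwarz reflection principle across the real axis of $A$, one obtains a biholomorphism $h:\D\to A$ satisfying $h(\bar z)=\overline{h(z)}$; in particular $h$ maps $(-1,1)$ into $A\cap\R$. Since starlikeness at infinity gives $A\cap\R\supseteq[h(0),+\infty)$, the point $+\infty$ is an accessible prime end of $A$; composing with $z\mapsto -z$ if needed (preserving real-symmetry), we can arrange that this prime end corresponds to $1\in\partial\D$, so that $\lim_{r\to 1^-}h(r)=+\infty$.

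\emph{Reduction to a scalar inequality.} The defining inequality in \eqref{Eq:Omega-h} depends on $|\zeta|$ alone, so the rotation $(\zeta,y)\mapsto(e^{it\theta}\zeta,y)$ automatically preserves $\Omega_h$, and it suffices to treat $\theta=0$. For each $y\in A$, the vertical cross-section of $\Omega_h$ is the open disc $\{|\zeta|^2<(1-|h^{-1}(y)|^2)/(2|1-h^{-1}(y)|)\}$, so invariance under $(\zeta,y)\mapsto(\zeta,y+t)$ amounts to monotonicity of this radius, which is
\[
\frac{1-|z|^2}{|1-z|}\le\frac{1-|\phi_t(z)|^2}{|1-\phi_t(z)|},\qquad z\in\D,\,t\ge 0,
\]
where $\phi_t:=h^{-1}(h(\cdot)+t)$ is the parabolic semigroup of $\D$ with Koenigs function $h$. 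Under the Cayley transform $w=(1+z)/(1-z)$ this becomes $\Phi(\psi_t(w))\ge\Phi(w)$ with $\Phi(w):=\Re w/|w+1|$ and $\psi_t$ the conjugated semigroup on $\Ha$. Differentiating at $t=0$ produces the equivalent pointwise bound
\[
(X+1+Y^2)\Re G(w)\ge XY\,\Im G(w),\qquad w=X+iY\in\Ha,
\]
where $G=1/\hat h'$, $\hat h:=h\circ\mathscr{C}^{-1}$, is the generator of $\psi_t$.

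\emph{Herglotz decomposition and conclusion.} The half-plane Berkson--Porta theorem gives $\Re G\ge 0$, and the real-symmetry of $A$ forces $G(\bar w)=\overline{G(w)}$. By the Herglotz representation, every such symmetric Herglotz function on $\Ha$ is a positive combination of the one-parameter family
\[
G_\theta(w)=\frac{2w}{\sin^2(\theta/2)\,w^2+\cos^2(\theta/2)},\quad\theta\in[0,\pi],
\]
whose endpoint values are $G_0(w)=2w$ and $G_\pi(w)=2/w$. Since the infinitesimal bound is linear in $G$, it suffices to verify it on each $G_\theta$, and a direct expansion produces a manifestly non-negative right-hand side. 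The main obstacle I anticipate is identifying the correct Herglotz decomposition and writing the target inequality so that after the algebraic expansion it visibly becomes a sum of non-negative terms; once that form is identified, the verification is routine.
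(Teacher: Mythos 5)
Your proof is correct, but the argument for the invariance of $\Omega_h$ is genuinely different from the paper's. The paper reduces the problem to showing that $t\mapsto |\Im\hat{h}^{-1}(y+t)|/\Re\hat{h}^{-1}(y+t)$ is non-increasing (after first using Denjoy--Wolff monotonicity of the real part), recognizes this ratio as an invariant of $k_\Ha(\,\cdot\,,(0,+\infty))$, pushes forward by the isometry $\hat h$ to $k_{h(\D)}(y+t, A\cap\R)$, and concludes by the Schwarz--Pick contraction of $w\mapsto w+s$ on $h(\D)$ together with the fact that $A\cap\R$ is translation-invariant. Your route instead passes to the infinitesimal generator $G=1/\hat h'$, converts the monotonicity of $\Phi(w)=\Re w/|w+1|$ along the flow into the pointwise inequality $(X+1+Y^2)\Re G\ge XY\,\Im G$, and settles it by the symmetric Herglotz decomposition $G=\int_0^\pi G_\theta\,d\tilde\mu(\theta)$, where your family $G_\theta(w)=\frac{2w}{\sin^2(\theta/2)\,w^2+\cos^2(\theta/2)}$ is indeed the Cayley image of the symmetrized extreme Poisson kernels. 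The one step you leave as ``routine'' does in fact close: with $w=X+iY$, $s^2=\sin^2(\theta/2)$, $c^2=\cos^2(\theta/2)$, $P=s^2(X^2-Y^2)+c^2$, $Q=2s^2XY$, one finds
\[
(X+1+Y^2)\Re G_\theta-XY\,\Im G_\theta=\frac{2\bigl(s^2X(X^2+Y^2)(X+1+2Y^2)+c^2X(X+1)\bigr)}{P^2+Q^2}\ge 0 ,
\]
so each extreme generator satisfies the bound and linearity gives the general case. The trade-off is clear: the paper's hyperbolic-geometry argument avoids any Herglotz machinery and any algebra, while your generator-plus-extreme-point argument is more analytic and self-contained once the decomposition is in hand, but it silently uses that $t\mapsto\psi_t(w)$ is differentiable (true here since $h$ is holomorphic) and requires the above expansion, which you should include rather than assert.
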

\begin{proof} Since $A$ is a simply connected domain different from $\C$ and symmetric with respect to $\R$, it follows (see, {\sl e.g.}, \cite[Prop.~6.1.3]{BCD}) that $\R\cap A$ is a geodesic for $k_A$. Hence, one can choose a Riemann map $h:\D \to A$ which maps the geodesic $(-1,1)$ of $\D$ (w.r.t. $k_\D$) onto $A\cap \R$ and such that  $\lim_{r\to 1}h(r)=+\infty$.

Now, let $\hat{h}:\Ha\to h(\D)$ be given by $\hat{h}(z)=h(\frac{z-1}{z+1})$. Note that, by construction, $\hat{h}(r)\in\R$ for all $r>0$ and  $\lim_{r\to +\infty} \hat{h}(r)=+\infty$.  Notice also that  $\lim_{r\to +\infty} \hat{h}^{-1}(r)=+\infty$.

Since $h(\D)$ is starlike at infinity, then $y+t\in h(\D)$ for every $t\geq 0$ and $y\in h(\D)$. Hence, by \eqref{Eq:Omega-tilde-h}, it is enough to show that the function
\begin{equation*}
\begin{split}
[0,+\infty)\ni t\mapsto L(t)&:=\frac{|1+\hat{h}^{-1}(y+t)|}{\Re (\hat{h}^{-1}(y+t))}\\&=\sqrt{1+\frac{2}{\Re \hat{h}^{-1}(y+t)}+\frac{1}{(\Re \hat{h}^{-1}(y+t))^2}+\frac{(\Im \hat{h}^{-1}(y+t))^2}{(\Re \hat{h}^{-1}(y+t))^2}}
\end{split}
\end{equation*}
is non-increasing. 

To this aim, let $\phi_t(z):=\hat{h}^{-1}(\hat{h}(z)+t)$, for $z\in \Ha$ and $t\geq 0$. Thus, $(\phi_t)_{t\geq 0}$ defines a continuous semigroup of holomorphic self-maps of $\Ha$. Since $\lim_{t\to +\infty}\phi_t(1)=\lim_{t\to +\infty} \hat{h}^{-1}(\hat{h}(1)+t)=+\infty$, the Denjoy-Wolff point of $(\phi_t)_{t\geq 0}$ is $\infty$. By the Denjoy-Wolff theorem for holomorphic self-maps of $\Ha$ (see, {\sl e.g.}, \cite[Theorem~1.7.8]{BCD}), the function $[0,+\infty)\ni t \mapsto \Re \phi_t(z)$ is non-decreasing for all fixed $z\in \Ha$, that is, $[0,+\infty)\ni t\mapsto \Re  \hat{h}^{-1}(y+t)$ is non-decreasing for all fixed $y\in h(\D)$. 

Therefore, in order to show that the function $L$ is non-increasing, we are left to show that $[0,+\infty)\ni t\mapsto\frac{|\Im \hat{h}^{-1}(y+t))|}{\Re \hat{h}^{-1}(y+t)}$ is non-increasing for all fixed $y\in h(\D)$. 

Since for all $\lambda>0$ the map $z\mapsto \lambda z$ is an automorphism of $\Ha$ it easily follows (see also \cite[Lemma~6.2.3]{BCD}) that for all $z\in \Ha$,
$$
k_{\Ha}(z,(0+\infty))=k_{\Ha}(1,e^{i\arctan \frac{|\Im z|}{\Re z} }).
$$
Fix $y\in h(\D)$, and let $0\leq t_0\leq t_1$. Then, taking into account that $(0,+\infty)\ni u\mapsto k_{\Ha}(1,e^{i\arctan u })$ is increasing, we have that
$$\frac{|\Im \hat{h}^{-1}(y+t_1))|}{\Re \hat{h}^{-1}(y+t_1)}\leq \frac{|\Im \hat{h}^{-1}(y+t_0))|}{\Re \hat{h}^{-1}(y+t_0)}$$ is equivalent to
\[
k_{\Ha}(\hat{h}^{-1}(y+t_1), (0,+\infty))\leq k_{\Ha}(\hat{h}^{-1}(y+t_0), (0,+\infty)).
\]
Since $\hat{h}$ is an isometry for the hyperbolic distance and maps $(0,+\infty)$ onto $\R\cap A$, the previous inequality is equivalent to
\[
k_{\hat{h}(\Ha)}(y+t_1, A\cap \R)\leq k_{\hat{h}(\Ha)}(y+t_0, A\cap \R).
\]
Let $x_{t_0}\in A\cap \R$ so that $k_{\hat{h}(\Ha)}(y+t_0, A\cap \R)=k_{\hat{h}(\Ha)}(y+t_0, x_{t_0})$. Taking into account that $w\mapsto w+(t_1-t_0)$ is a holomorphic self-map of $\hat{h}(\Ha)$, thus non-expanding w.r.t. the hyperbolic distance, we have
\begin{equation*}
\begin{split}
k_{\hat{h}(\Ha)}(y+t_1, A\cap \R)&\leq k_{\hat{h}(\Ha)}(y+t_1, x_{t_0}+t_1-t_0)\leq k_{\hat{h}(\Ha)}(y+t_0, x_{t_0})\\&=k_{\hat{h}(\Ha)}(y+t_0, A\cap \R),
\end{split}
\end{equation*}
and we are done.
\end{proof}

Assume now that  if $h(\D)$ is symmetric with respect to the real axis, $h((0,+\infty))=h(\D)\cap\R$, $\lim_{r\to 1}h(r)=+\infty$, and  $\delta_{h(\D)}(t)\to+\infty$ as $t\to+\infty$.
By the previous lemma  it follows that $$f_t:= \varphi_h^{-1}\circ g_t \circ \varphi_h,\quad t\geq 0,$$ (where the map $\varphi_h$ is defined in \eqref{Eq:defF}) defines a  continuous semigroup of holomorphic self-maps of $\B^2$.  Such maps provide a class of examples which generalizes  Corollary~\ref{Cor:semimodel-ex}:

\begin{proposition}\label{Prop:semimodel-ex} Let $h:\D\to \Ha$ be a univalent map such that $h(\D)$ is symmetric with respect to the real axis, $h((-1,1))\subseteq \R$ and $\lim_{r\to 1}h(r)=+\infty$. Assume also that $\delta_{h(\D)}(t)\to+\infty$ as $t\to+\infty$. Let $\theta\in \R$ and for  $t\geq 0$ let
$$f_t(z,w)=\left(h^{-1}(h(z)+t), \frac{ e^{it \theta} w\sqrt{1-h^{-1}(h(z)+t)}}{\sqrt{1-z}}\right).$$
Then $(f_t)$ is a parabolic  (continuous) semigroup of holomorphic self-maps of $\B^2$.  It
has an elliptic  canonical model given by the map $\ell(z,w)=\frac{1}{\sqrt{2}}\frac{w}{\sqrt{1-z}}$,
which semi-conjugates $f_t$ to the group of elliptic automorphisms $z\mapsto e^{it \theta}z$ on $\D$.
In particular, if $\theta\not\in 2\pi\Z$, then  for every point with $w=0$ the map $f_1$ has zero step and for every point with $w\neq 0$ it has  nonzero step.
\end{proposition}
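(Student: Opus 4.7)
The strategy is to imitate the proof of Corollary~\ref{Cor:semimodel-ex}, replacing the Cayley transform $\mathscr C$ by the general univalent map $h$. Most of the work has been packaged into Lemma~\ref{Lem:inside} and Proposition~\ref{Prop:tech-for-h}, so what remains is essentially bookkeeping.

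First I would check by direct substitution that $\varphi_h^{-1}\circ g_t\circ\varphi_h$ coincides with the formula displayed in the statement. Inverting $\varphi_h$ yields $\varphi_h^{-1}(\zeta,y)=(h^{-1}(y),\sqrt{2}\,\zeta\sqrt{1-h^{-1}(y)})$, and the composition is immediate. Since Lemma~\ref{Lem:inside} guarantees that $\Omega_h=\varphi_h(\B^2)$ is $g_t$-invariant for every $t\geq 0$ and every $\theta\in\R$, this identity shows that $(f_t)_{t\geq 0}$ is indeed a continuous semigroup of holomorphic self-maps of $\B^2$.

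Next I would verify that the hypotheses of Proposition~\ref{Prop:tech-for-h}(2) are met. Radial convergence $h^{-1}(t)\to 1$ as $t\to+\infty$ is automatic from $h((-1,1))\subseteq\R$ and $\lim_{r\to 1}h(r)=+\infty$ together with univalence: for large $t$ the real point $h^{-1}(t)$ must lie in $(0,1)$ and tend to $1$ along the real axis. Combined with the assumption $\delta_{h(\D)}(t)\to+\infty$, Proposition~\ref{Prop:tech-for-h}(2) then yields, for every $0<r<1$ and $R>0$, some $M\geq 0$ with $\D_r\times S_{R,M}\subset\Omega_h$. It follows at once that $\Omega_h$ is $g_t$-absorbing for every $t\geq 0$: any orbit $\{(e^{is\theta}\zeta_0,y_0+s):s\geq 0\}$ eventually enters any such product. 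Therefore $(\D\times\C,\varphi_h,g_t)$ is a model of $f_t$ in the sense of \cite[Definition~3.3]{Arosio-Bracci}.

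Finally, invoking \cite[Theorem~4.10]{Arosio-Bracci} exactly as in the proof of Corollary~\ref{Cor:semimodel-ex}, one projects this model onto the Kobayashi hyperbolic factor $\D$ to obtain the canonical semi-model $(\D,\ell,z\mapsto e^{it\theta}z)$, where $\ell=\pi_1\circ\varphi_h=\tfrac{1}{\sqrt{2}}\tfrac{w}{\sqrt{1-z}}$ and $z\mapsto e^{it\theta}z$ is an elliptic automorphism of $\D$. The assertion about hyperbolic steps is then immediate from \eqref{step-modello}: $s(z,w)=k_\D(\ell(z,w),e^{i\theta}\ell(z,w))$ vanishes precisely when $w=0$. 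Parabolicity of each $f_t$ follows from \cite[Lemma~4.12]{Arosio-Bracci} (a hyperbolic self-map cannot admit an elliptic canonical semi-model); alternatively, one inspects the $f_t$-invariant slice $\{w=0\}$ on which $f_t$ restricts to the parabolic one-parameter semigroup $z\mapsto h^{-1}(h(z)+t)$ of $\D$, parabolic because $w\mapsto w+t$ has no interior fixed point in $h(\D)$. The only genuinely non-routine step is the deduction of both radial convergence and the inscription property from the hypotheses on $h$; everything else is a transcription of the model-theoretic argument of Corollary~\ref{Cor:semimodel-ex}.
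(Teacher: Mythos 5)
Your argument follows the paper's proof almost line by line, and the overall structure is correct, but there are two small slips worth correcting. First, since the object here is a continuous semigroup $(f_t)$ rather than a single self-map, the relevant references in \cite{Arosio-Bracci} are the semigroup versions, namely Definition~6.7 and Theorem~6.10, not Definition~3.3 and Theorem~4.10 which you carried over from the discrete-time setting of Corollary~\ref{Cor:semimodel-ex}. Second, in your alternative justification for parabolicity, the observation that $w\mapsto w+t$ has no interior fixed point in $h(\D)$ only shows the restricted semigroup on the slice $\{w=0\}$ is non-elliptic; a hyperbolic one-parameter semigroup also lacks interior fixed points. To conclude parabolicity one must invoke the hypothesis $\delta_{h(\D)}(t)\to+\infty$ as $t\to+\infty$, which forces the Koenigs domain $h(\D)$ to escape every horizontal strip (and, by symmetry, every horizontal half-plane), so the restricted semigroup is parabolic of zero hyperbolic step; this is precisely the route the paper takes, citing \cite[Thm.~9.3.5]{BCD}. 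Your verification that radial convergence $h^{-1}(t)\to 1$ follows from $h((-1,1))\subseteq\R$, $\lim_{r\to 1}h(r)=+\infty$, symmetry and univalence is a useful explicit check that the paper leaves implicit, and the rest of the bookkeeping (computing $\varphi_h^{-1}\circ g_t\circ\varphi_h$, applying Lemma~\ref{Lem:inside} for invariance and Proposition~\ref{Prop:tech-for-h}(2) for absorption) matches the intended argument.
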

\begin{proof}
By (2) of Proposition~\ref{Prop:tech-for-h}, the triple $(\D\times\C, \varphi_h, g_t)$ is the ``model'' (in the sense of \cite[Definition~6.7]{Arosio-Bracci}) of the semigroup $(f_t)$. Hence, by \cite[Theorem~6.10]{Arosio-Bracci}, $(\D, \ell, z\mapsto e^{it\theta}z)$ is the canonical Kobayashi hyperbolic semi-model of the semigroup $(f_t)$.

The statement on the steps follows immediately from  \eqref{step-modello}. 

Since by \cite[Lemma 4.12]{Arosio-Bracci} a hyperbolic semigroup cannot have an elliptic  canonical Kobayashi hyperbolic semi-model, it follows that $(f_t)$ is parabolic.
Alternatively, one can obtain the same conclusion directly by noticing that the slice $\{w=0\}$ of $\B^2$ is invariant and that  $f_t(\cdot,0)=(h^{-1}(h(\cdot)+t),0)$, $t\geq 0$. Hence, taking into account that $\delta_{h(\D)}(t)\to+\infty$ as $t\to+\infty$, it follows that $(\C, h, z\mapsto z+t)$ is the  model of  the semigroup of holomorphic self-maps of $\D$ defined by $\D\ni z\mapsto h^{-1}(h(z)+t)$, $t\geq 0$, which is thus a parabolic semigroup of zero hyperbolic step (see, {\sl e.g.}, \cite[Thm. 9.3.5]{BCD}).
 \end{proof}

\end{document}